\newcommand{\numberset}{\mathbb}
\newcommand{\N}{\numberset{N}}
\newcommand{\R}{\numberset{R}}
\newcommand{\I}{\numberset{I}}
\newcommand{\eqcolon}{\mathrel{\rotatebox[units=360]{180}{\ensuremath{\coloneq}}}}
\DeclareRobustCommand{\rchi}{{\mathpalette\irchi\relax}}
\newcommand{\irchi}[2]{\raisebox{\depth}{$#1\chi$}}
\theoremstyle{plain} 
\newtheorem{thm}{Theorem}[section]      
\newtheorem{cor}[thm]{Corollary} 
\newtheorem*{lem*}{Lemma}
\newtheorem{prop}[thm]{Proposition} 
\newtheorem*{theorem*}{Theorem}
\theoremstyle{plain} 
\newtheorem{defn}[thm]{Definition}
\newtheorem*{defn*}{Definition}
\newtheorem{ex}[thm]{Example}
\newtheorem*{ex*}{Example}
\theoremstyle{plain} 
\newtheorem{obs}[thm]{Remark} 
\newtheorem*{obs*}{Remark} 
\newtheorem*{prob*}{Problem} 
\title{New solutions for the symmetrical n-body problem through variational approach and optimisation techniques}
\author{Roberto Ciccarelli, Margaux Introna, \\ Susanna Terracini, Massimiliano Vasile}
\date{}
\begin{document}

\maketitle

\abstract{Advances in the variational approach to the $n$-body problem have led to significant progress in celestial mechanics, uncovering new types of possible orbits. In this paper, critical points of the Lagrangian action associated with the $n$-body problem are analysed using evolutionary algorithms to identify periodic and symmetrical solutions of the discretised system. A key objective is to locate minimum points of the action functional, as these correspond to feasible periodic solutions that satisfy the system's differential equations. By employing both stochastic and deterministic algorithms, we explore the solution space and obtain numerical representations of these orbits.  

Next, we examine the stability of these orbits by treating them as critical points. One approach is to compute their discrete Morse index to distinguish between minimum points and saddle points. Another is to classify them based on their action levels. Finally, analysing the boundaries of their attraction basins allows us to identify non-minimal critical points via the Ambrosetti-Rabinowitz Mountain Pass Theorem. This leads to an updated version of the algorithm that provides a constructive proof of the theorem, yielding new orbits in specific cases.  

This paper builds upon and extends the results presented in \cite{nostro}, providing a more detailed theoretical framework and deeper insights into the formulation. Additionally, we present new numerical results and an extended analysis of the critical points found, further enhancing the findings of the previous study.  
}

\vspace{2mm}
\noindent\emph{Keywords.} n-body problem, optimisation technique, evolutionary computing, Mountain Pass Theorem.

\tableofcontents

\section{Introduction}
\label{sec:intro}

The $n$-body problem examines the motion of $n$ massive bodies in the Euclidean space, where their trajectories are governed by mutual gravitational attractions. Let $\R^d$ represent the $d$-dimensional Euclidean space, and let $m_1, \ldots, m_n > 0$ denote the positive masses of these bodies, with with $n \geq 2$. The configuration space $\rchi$ of these heavy $n$ bodies with the centre of mass of the system fixed at the origin is defined as
\begin{equation*}
    \rchi \coloneq \left \{ x = (x_1, \ldots, x_n) \in (\R^d)^n : \sum_{i=1}^{n} m_ix_i = 0 \right \}.
\end{equation*}
If the $i$-th and $j$-th bodies collide, the $n$-body problem encounters a singularity. To address this, the singular sets are defined as
\begin{equation*}
    \Delta_{ij} \coloneq \left \{ x \in \rchi : x_i = x_j \right \} \quad \text{and} \quad \Delta \coloneq \bigcup_{i,j} \Delta_{ij},
\end{equation*}

\noindent allowing us to define the collision-free configuration set as $\hat{\rchi} \coloneq \rchi \setminus \Delta$. The potential function associated with the classical $n$-body problem is
\begin{equation*}
    U(x) \coloneq \sum_{i<j} \dfrac{m_im_j}{\|x_i -x_j\|}, \quad \text{for} \,\, x \in \rchi
\end{equation*}
and, given $T_x\rchi$ the tangent space to $\rchi$, the kinetic energy function is
\begin{equation*}
    K(\dot{x}) \coloneq \dfrac{1}{2} \sum_{i=1}^n m_i \|\dot{x}\|^2,  \quad \text{for} \,\, \dot{x} \in T_x\rchi,
\end{equation*}
while the Lagrangian function is
\begin{equation*}
    L(x, \dot{x}) \coloneq K(\dot{x}) + U(x),  \quad \text{for} \,\, x \in \rchi, \, \dot{x} \in T_x\rchi.
\end{equation*}

\noindent In this framework, the equations of motion for the bodies $x_1(t), \ldots, x_n(t)$ in configuration space $\rchi$ are given by Newton’s laws:

\begin{equation}
\label{bc 1}
    m_i \Ddot{x}_i = - \dfrac{\partial U}{\partial x_i}, \quad i = 1, \ldots, n.
\end{equation}
\\
\noindent There is considerable interest in identifying periodic solutions of Equation \eqref{bc 1}. In our framework, a periodic solution of \eqref{bc 1} implies that each trajectory \( x_i(t) \) of the \( i \)-th body traces a closed curve in \( \mathbb{R}^d \), often referred to as a loop within the configuration space.

\noindent Since the 1990s, variational methods have proven effective in generating collision-free solutions to \eqref{bc 1}. We refer the reader to the following articles and references therein: \cite{Ambrosetti2, Arioli, Bahri, Bessi, Chenciner2, ChencinerMontgomery, FerrarioTerracini05, Chen, Ramos, ChencinerVenturelli, Barutello_2004, gronchi, MONTALDI_STECKLES_2013, Sim}. Following the variational approach outlined in \cite{FerrarioTerracini05}, for \( T > 0 \), let us define the time circle \( \mathbb{T} \simeq \mathbb{R} / T \mathbb{Z} \), allowing us to set up the space of \( H^1 \)-periodic loops in \( \rchi \), as well as those free from collisions, as follows:
\begin{align*}
    \Lambda &\coloneq H^1(\mathbb{T}; \rchi) = \{ x \in H^1([0,T] ; \rchi) : x(0) = x(T) \}, \\
    \hat{\Lambda} &\coloneq H^1(\mathbb{T}; \hat{\rchi}) = \{ x \in H^1([0,T] ; \hat{\rchi}) : x(0) = x(T) \}.
\end{align*}

\noindent We define the Lagrange action functional \( \mathcal{A}: \Lambda \longrightarrow \mathbb{R} \cup \{ + \infty \} \) by
\begin{equation}
\label{action functional}
    \mathcal{A}(x) \coloneq \int_0^T L(x(t), \dot{x}(t)) \, \text{d}t, \quad \text{for} \,\, x \in \Lambda.
\end{equation}

\noindent As \( \mathcal{A} \in \mathcal{C}^1(\hat{\Lambda}) \), it follows that if \( x \in \hat{\Lambda} \) is a critical point of \( \mathcal{A} \), then \( x \) is a \( T \)-periodic solution of \eqref{bc 1}. However, in the context of the $n$-body problem, the first challenge is the lack of compactness in the open set of collision-free paths, which also affects the sub-levels of $\mathcal{A}$. Nonetheless, certain modifications can reduce the problem to more tractable models that restore coercivity, thereby enabling minimisation. We refer the reader to the following articles and references therein: \cite{gronchi, ChencinerVenturelli, FerrarioTerracini05, BarutelloFerrarioTerracini, XIA2022302}.
\\
\noindent Following \cite{FerrarioTerracini05}, in this work, we consider one such approach, in which candidate loops must meet specific symmetries in time, space, and indices. Specifically:
\begin{itemize}
    \item the \textit{space} \( \mathbb{R}^d \) where each component \( x_i(t) \) of \( x(t) \) resides;
    \item the \textit{time circle} \( \mathbb{T} \subset \mathbb{R}^2 \), representing the trajectory's period;
    \item the \textit{index set} \( \{1, \ldots, n\} \), which labels the \( n \)-bodies. \\
\end{itemize}

\noindent For a finite group \( G \), the action of \( G \) on the space of \( H^1 \)-periodic loops can be represented via three different group actions on \( \mathbb{R}^d \), \( \mathbb{R}^2 \), and \( \{1, \ldots, n\} \). Specifically, \( G \) acts as a subgroup of \( O(d) \), \( O(2) \), and the symmetric group \( \Sigma_n \) through the following homomorphisms:
\begin{equation*}
   \rho : G \rightarrow O(d), \quad \tau : G \rightarrow O(2), \quad \sigma : G \rightarrow \Sigma_n.
\end{equation*}

\noindent Define \( \bar{G} \) as \( \bar{G} \coloneq G / \mathrm{ker}\,\tau \). If \( \bar{G} \) is non-cyclic, let \( \I =[t_0, t_1] \) represent the fundamental domain (see \cite{FerrarioTerracini05}) for the action of \( G \), and denote \( H_0 \) and \( H_1 \) as the two maximal \( \mathbb{T} \)-isotropy subgroups corresponding to the instants \( t_0 \) and \( t_1 \), respectively. We define the space of fixed-end paths as
\begin{equation*}
    Y \coloneq \{x \in H^1(\mathbb{T}; \rchi^{\mathrm{ker}\,\tau}) : x(t_0) \in \rchi^{H_0}, \, x(t_1) \in \rchi^{H_1}\}.
\end{equation*}
If \( \bar{G} \) is cyclic, then we define \( Y \) as
\begin{equation*}
    Y \coloneq \{x \in H^1([t_0, t_1]; \rchi^{\mathrm{ker}\,\tau}) : g\,x(t_0) = x(t_1)\}.
\end{equation*}

\noindent The space of fixed-end paths $Y$ pointwise coincides with $\Lambda$  (or $\hat{\Lambda}$) and hence we can study the restriction of the action functional to the fundamental domain \( \I \) on \( Y \):
\begin{equation}
\begin{aligned}
\label{action functional on fundamental domain}
    \mathcal{A}_{\I} : Y &\longrightarrow \mathbb{R} \cup \{+ \infty \} \\
    y &\longmapsto \mathcal{A}_{\I}(y) \coloneq \int_{\I} L(y(t), \dot{y}(t)) \, \text{d} t.
\end{aligned}
\end{equation}

\noindent  Consequently, this reformulates the problem as a \textit{Bolza}-type problem, where the initial conditions reduce to conditions on the boundaries of $\I$, which are thus \textit{Bolza} conditions. Without loss of generality, we can assume \( \I = [0, \pi] \) and $T = l \pi$, with $l \in \N$. By leveraging the symmetry group \( G \), a critical point of \eqref{action functional on fundamental domain} can then be extended symmetrically, producing the entire orbit. 

\noindent This approach is essential for the minimisation problem, as it enables us to transition from analysing periodic loops to minimising among paths with fixed-end conditions, where initial and final points are constrained to the endpoints of such subdomains. For further details, refer to \cite{FerrarioTerracini05, symorb}. \\
This paper is structured as follows: Section \ref{sec:Finding the critical points} presents an algorithm for identifying critical points, which is then applied to the symmetrical $n$-body problem in Section \ref{sec: Solutions of the symmetrical $n$-body problem}; Section \ref{sec:Analysing the critical points} discusses two methods for analysing the identified critical points—using the discrete Morse index (Section \ref{sec: Discrete Morse index}) or intra and trans level distances (Section \ref{sec: Intra and trans level distances}). In Section \ref{sec:Mountain pass solutions}, an enhanced version of the algorithm that provides a constructive proof of the Ambrosetti-Rabinowitz Mountain Pass Theorem is introduced, which is subsequently applied to the symmetrical $n$-body problem in Section \ref{sec: Mountain pass solutions for the symmetrical $n$-body problem} to identify non-minimal critical points in specific cases. Moreover, in Section \ref{section Examples}, we present two examples that put into practice the theory developed in the previous sections.

\section{How to find critical points}
\label{sec:Finding the critical points}
Following \cite{CI24}, there are two possible approaches for discretising the action functional \( \mathcal{A} \) defined in \eqref{action functional}.

\paragraph{Discretisation through points}
\label{Discretisation through points}
The first approach explained in this paragraph involves discretising the integral operator using an appropriate quadrature technique and then approximating the derivatives using finite differences. Specifically, we consider a uniform grid \( 0 = t_0 < t_1 < \ldots < t_{M+1} = \pi \), where \( t_k \) represents the \( k \)-th time instant within the interval \( \I \). We define the function \( G(t) \) as follows:
\begin{equation*}
    G(t) = G(x_1(t), x_2(t), \ldots, x_n(t)) = \sum_{i=1}^n \frac{m_i}{2} \|\dot{x}_i\|^2 + \sum_{i<j} \frac{m_i m_j}{\|x_i - x_j\|}
\end{equation*}
Next, we apply the composite trapezoidal rule to approximate the integral over the \( M+1 \) intervals, yielding:
\[
\mathcal{A}_{\I} = \int_0^\pi G(t) \, dt \sim \frac{h}{2} \left[ G(t_0) + G(t_{M+1}) + 2 \sum_{k=0}^{M} G(t_k) \right]
\]
where \( h = \frac{\pi}{M+1} \), and \( y_i^k \sim x_i(t_k) \in \mathbb{R}^d \) for \( i = 1, \ldots, n \) and $ k = 0, \ldots, M+1 $. Using a forward difference formula for the approximation of the derivative with respect to time, we obtain the following approximation for the action functional:
\begin{equation}
\label{Approximated action functional}
\begin{aligned}
    \mathcal{A}_{\I} & \sim \sum_{k=1}^{M} \left[ \sum_{i=1}^n \frac{m_i}{2} \frac{\|y_i^{k+1} - y_i^k\|^2}{h} + h \sum_{i<j} \frac{m_i m_j}{\|y_i^k - y_j^k\|} \right] \\
    &+ \sum_{i=1}^n \frac{m_i}{4h} \left[ \|y_i^1 - y_i^0\|^2 + \|y_i^{M+2} - y_i^{M+1}\|^2 \right] \\
    &+ \frac{h}{2} \sum_{i<j} m_i m_j \frac{\|y_i^0 - y_j^0\| + \|y_i^{M+1} - y_j^{M+1}\|}{\|y_i^0 - y_j^0\| \|y_i^{M+1} - y_j^{M+1}\|} = \mathcal{A}^{(1)}_h
\end{aligned}
\end{equation}
\\
where we introduce appropriate ghost points \( y_i^{M+2} \). For further details, see \cite{turn0search0, turn0search3,turn0academia19, myrdal2013, CI24, symorb}.

\noindent As a result, we need to optimise the objective function:
\begin{equation} 
\label{eq:f1}
    f_1(y_i^k) = \mathcal{A}^{(1)}_h, \quad i = 1, \ldots, n, \; k = 0, \ldots, M+1
\end{equation}
The numerical solution is represented by a block real matrix \( (y_i^k)_{i,k} \in \mathbb{R}^d \), with \( i = 1, \ldots, n \) and \( k = 0, \ldots, M+1 \), having dimensions \( d \times n \times (M+2) \).

\paragraph{Discretisation through Fourier coefficients} 
\label{Discretisation through Fourier coefficients}
Another approach is using Fourier coefficients; for further details, see \cite{Sim, Sim2, Sim3, gronchi}. In our specific case following \cite{symorb}, one approximates the solution as the sum of a linear part, in particular the segment between the initial configuration $x_0$ and the final configuration $x_1$, and a truncated Fourier series, in details
\begin{equation}
\label{eq:F_tronc}
    x(t) = x_{0} + \frac{t}{\pi}(x_{1} - x_{0}) + \psi(t),
\end{equation}
where \( t \in [0,\pi] \) and \( \psi(0) = \psi(\pi) = 0 \).
\noindent We approximate the function \( \psi(t) \) using a truncated Fourier polynomial where $F \in \N$ stands for the length of Fourier polynomials  \( \psi(t) = \sum_{k=1}^F A_{k} \sin( k t) \in \mathbb{R}^{(d\times n)} \), for \( t \in [0,\pi] \). In this expression, we only use sine functions to ensure that the approximate solution remains periodic. Substituting the expression \eqref{eq:F_tronc} into the action functional \eqref{action functional} results in:
\begin{equation}
\label{eq:f2}
    f_2(x_0, x_1, A_{k}) = \mathcal{A}^{(2)}_h, \quad k = 1, \ldots, F.
\end{equation}
This is a function in \( (F+2) \times d  \times n \) variables which can be minimised using an appropriate optimisation technique. For further details, see \cite{symorb}.
\\

\noindent In both models, we introduce a scalar function \( f_i \), for \( i = 1, 2 \), given by either \eqref{eq:f1} or \eqref{eq:f2}, in the variables \( x \), represented by \( y_i^k \) or \( \{x_0, x_1, A_{k}\} \), which correspond to the unknowns in our problem. The functions \( f = f_i \), for \( i = 1, 2 \), to be maximised or minimised, are typically referred to as the objective function.

\subsection{Numerical optimisation routine}
\label{Numerical optimisation routine}
Following \cite{symorb}, we outline the following steps for the numerical optimisation routine:\\

\begin{enumerate}[label=\textbf{Step \Roman*}, leftmargin=*]
    \item Consider a random sequence of vectors
    \begin{equation*}
        A_0, A_1, \ldots, A_{F+1} \in \mathbb{R}^{(d \times n)},
    \end{equation*}
    where \( A_0 \) and \( A_{F+1} \) represent the initial and final points of the ansatz. Since this is a \textit{Bolza}-conditioned problem, project these vectors onto \( \rchi \) and then onto the boundary manifolds. For further details, see \cite{symorb}. \\
    
    \item Discretise the action functional defined in \eqref{action functional} using Fourier coefficients as outlined in Equation \eqref{eq:F_tronc}. From this, construct an initial guess \( y(t) \coloneq x(t) + s(t) \), where \( x(t) \) denotes the straight line segment between \( x_0 \) and \( x_1 \):
    \begin{equation*}
        x(t) = x_0 + \frac{t}{\pi}(x_1 - x_0), \quad \text{for} \,\, t \in [0,\pi],
    \end{equation*}
    and \( s(t) \) is the truncated Fourier series:
    \begin{equation*}
        s(t) \coloneq \sum_{k=1}^F A_k \sin(k t), \quad \text{for} \,\, t \in [0,\pi].
    \end{equation*}
    
    \item Discretise the interval \( [0,\pi] \) with a grid \( (t_h) \), and project any configuration \( y(t_h) \) onto the symmetric configuration space in order to satisfy the Bolza conditions of the problem.
    \\
    \item \label{Step 4: Choice of the algorithm} Express the action functional on the interval \( [0,\pi] \), evaluated at \( y \) as in expression \eqref{eq:f2}. This is now a function of \( (F+2) \times d \times n \) variables, which are the parameters to optimise. The choice of optimisation algorithm is discussed in the following paragraph, but one could, for instance, use a \textit{steepest descent method} or a \textit{Newton method} to move from \( y \) to \( y_{\text{new}} \). Once this is done, project \( y_{\text{new}} \) back into the symmetric configuration space.
    \\
    \item The optimised path, now defined on the fundamental domain \( \I \) through Fourier coefficients, can be extended to the entire period using the symmetries of the group and an inverse Fourier transform.
    
\end{enumerate}

\paragraph{Choice of the optimising algorithm}

In this paragraph, we discuss the selection of the algorithm outlined in \ref{Step 4: Choice of the algorithm}. It is important to note that there is no single ``universal" algorithm for solving a particular problem. Instead, different algorithms are tailored to specific optimisation problems based on their characteristics. We refer the reader to the following article and references therein: \cite{CI24, Nocedal, Kapela2007, hillier2005, Akan2023, Gupta2021, Liu2014, Meng2005, OGUNTOLA2021109165, Oztas2023, Kapela2003}.

\noindent Optimisation algorithms are iterative methods, and they can be distinguished by the technique they use to progress from one iteration to the next. Broadly speaking, we can categorise methods into those that compute the next iteration using the values of the objective function, the constraint functions, and potentially their first or second derivatives. Other methods, however, rely on the information accumulated from previous iterations or the local information derived from the current point.

\noindent Choosing the right algorithm is a critical decision, as it impacts both the speed at which a problem is solved and the quality of the solution. A good algorithm should be efficient in terms of memory usage and computational time, but it must also be accurate. That is, it must deliver precise solutions without being overly sensitive to data errors or rounding issues that arise when the algorithm is implemented on a computer. However, it is important to remember that these goals can often compete with one another.

\noindent Furthermore, the regularity of the functions involved is key. It helps exploit local information about constraints and objective functions, making it easier to assess the nature of the solutions, whether local or global. Many optimisation algorithms for nonlinear problems lead to local minimising solutions, where the objective function is smaller than at nearby points. Finding a global solution, where the objective function attains its smallest value across the entire domain, is often challenging. In cases where the objective function is convex, global solutions are also local. However, for most nonlinear problems, whether constrained or unconstrained, solutions tend to be local and it is often necessary to search multiple regions of the solution space before applying global optimisation methods.

\noindent In \cite{CI24}, several novel approaches are introduced to tackle the challenge of identifying as many critical points as possible of the action functional defined in \eqref{Approximated action functional}. Given the complexity of the problem, this paper adopts a hybrid strategy combining two complementary algorithms: a multi-population adaptive version of inflationary differential evolution (MP-AIDEA, see \cite{DVM20}) and a domain decomposition method referred to as Lattice. MP-AIDEA integrates differential evolution with restart and local search mechanisms from monotonic basin hopping, enabling an adaptive and efficient exploration of the solution space. Meanwhile, the Lattice algorithm ensures thorough domain decomposition, systematically covering the entire space. Together, these methods enhance the ability to uncover periodic orbits that satisfy the system's differential equations.  

\noindent A key advantage of MP-AIDEA over traditional techniques like gradient descent is its ability to navigate highly non-convex optimisation landscapes. Due to the intrinsic nature of the problem, one expects an exceptionally high number of local minimum points and saddle points, resulting in a complex and structured solution space. Gradient descent, as a single-point method, tends to become trapped in individual local minimisers, limiting its ability to explore the broader landscape.  

\noindent In contrast, MP-AIDEA employs a multi-population framework with adaptive mechanisms that enable a more global and systematic search. By integrating differential evolution with local search strategies and restart mechanisms, it efficiently escapes shallow local minimisers and captures a diverse set of solutions. This adaptability is particularly valuable in our setting, where the abundance of minimum points suggests a rich network of periodic orbits.  

\noindent Additionally, MP-AIDEA’s parallelised structure and integration with the Lattice method enhance its robustness and efficiency in high-dimensional spaces. This approach ensures better scalability and a greater diversity of solutions compared to conventional optimisation techniques, making it a powerful tool for systematically identifying multiple periodic orbits and uncovering the deeper structure of the problem.

\subsection{MP-AIDEA}
\label{MP-AIDEA}
Some optimisation problems are so complex that finding an exact optimal solution is not feasible. To address this, metaheuristic methods have been developed to provide strategic guidelines and flexible frameworks for exploring solution spaces. These methods are particularly useful for problems where traditional optimisation techniques struggle, as they balance exploration (searching broadly across the solution space) and exploitation (refining promising solutions). For more details on metaheuristic methods, see \cite{CI24, Gupta2021, Parejo2012, Sorensen2012}. One example of such an approach is the Multi Population Adaptive Inflationary Differential Evolution Algorithm (MP-AIDEA, \cite{DVM20}).

\noindent MP-AIDEA is a population-based evolutionary algorithm designed to find approximate solutions for single-objective global optimisation problems in continuous spaces. In this context, a population refers to a group of candidate solutions, called agents, that evolve over iterations to find better solutions. Let $N_\text{P}$ be the number of populations and $N_\text{A}$ be the number of agents in each population. MP-AIDEA operates through the following main steps: \\
\begin{enumerate}
    \item \textbf{Differential Evolution}: MP-AIDEA starts by running $N_\text{P}$ instances of Differential Evolution (DE \cite{Price2005}) in parallel, one for each population. Differential Evolution is an optimisation technique that generates new candidate solutions by combining existing ones. Initially, agents are randomly distributed across the parameter space to ensure broad exploration. Each DE process consists of $N_\text{S}$ stages, where new parameter vectors are generated through three main operations:
    \begin{itemize}
        \item \textit{Mutation}: randomly combining parameter vectors to create a new candidate solution.

    \item \textit{Crossover}: mixing components of the new candidate with the current agent to increase diversity.
    
    \item \textit{Selection}: comparing the new candidate with the current agent and keeping the better one.
    \end{itemize}
    
\noindent This cycle repeats until a stopping criterion is reached. For more details, see \cite{Price2005, DVM20, CI24}. \\

    \item \textbf{Local Search}: after the Differential Evolution phase, a local search is performed to refine the solutions. This search starts from the best agent in each population but only if that agent is not in the basin of attraction of a previously found local minimum points. Here, the basin of attraction is the region of the solution space that leads to a particular local minimum point.\\

    \item \textbf{Basin Hopping}: to explore different regions of the solution space, each population is restarted near the local minimum point found in the previous step. This technique, known as Basin Hopping, allows the algorithm to "jump" from one basin of attraction to another, potentially finding better local minimum points. For more details, see \cite{Leary2000, Wales1997}. \\
\end{enumerate}

\noindent By integrating Differential Evolution with local search and Basin Hopping, MP-AIDEA efficiently explores the solution space. This combination allows populations to move within a funnel structure (see \cite{funnel_structure}), transitioning from one local minimum point to another until a global minimum point is found.

\noindent MP-AIDEA introduces a unique mechanism to avoid repeatedly detecting the same local minimum point. When a population revisits the basin of attraction of an already found minimum point, it is restarted in a different region of the solution space. To track discovered minimum points, an archive $\textgoth{A}$ is maintained. This archive stores all detected local minimum points along with the population states at each restart, ensuring a diverse exploration of the solution space without redundancy. For more details, see \cite{DVM20}.

\subsection{Lattice}
\label{Lattice}

\noindent The main goal is to explore the domain in a systematic way to find as many local minimum points as possible. To do this, we use the Lattice algorithm, which decomposes the domain into smaller regions based on the distribution of local minimum points identified so far. This adaptive partitioning ensures that the search becomes more focused and efficient as we progress. 

\noindent  Initially, the Lattice algorithm requires a set of known local minimum points, which are obtained by running an optimisation algorithm. Once these minimisers are identified, they guide the domain decomposition process. At each decomposition step, referred to as a \textit{layer} \( j \), where \( j = 1, \ldots, \text{NoL} \in \mathbb{N} \), the domain is divided into two subregions. At this point, the optimisation algorithm is rerun within each subregion to find a new set of minimisers, which are then used to guide the next step of the decomposition. This iterative partitioning allows for a progressively finer search, with each subregion being explored independently. We choose MP-AIDEA as the optimisation algorithm for this task, but other optimisation algorithms could also be used for this purpose.

\noindent More specifically, let \( \Xi \) represent the domain of our optimisation parameters. The challenging aspect lies in determining how to divide \( \Xi \) into two subregions, \( \Xi_1 \) and \( \Xi_2 \). A simple equal division is not optimal, as we aim to focus on areas where fewer minimum points have been found. Thus, we prefer an unequal division, with one subregion containing a higher concentration of minimum points, while the other remains less populated.

\noindent This approach can be demonstrated with a simple example.
\vspace{0.2cm}

\begin{ex}
\label{Example lattice}
The process is repeated for each layer, so we consider \( j=1 \) for this example.\\
Let \( a, b, c, d \in \mathbb{R} \), where \( a < b \) and \( c < d \). Define the optimisation parameters \( x \in I = [a, b] \subset \mathbb{R} \) and \( y \in J = [c, d] \subset \mathbb{R} \), so that our domain is \( \Xi = I \times J \). This can be easily generalized to \( n \in \mathbb{N}, n > 2 \) optimisation parameters.

\noindent Let \( \mathcal{M} = \{m_i\}_{i=1}^M \), with \( M < +\infty \), be the set of local minimum points found during the optimisation process. Each local minimum point \( m_i \) corresponds to a point in the optimisation domain where the function reaches a local minimum value. Specifically, for each \( m_i \), we have \( m_i = (x_i, y_i) \in I \times J \) for \( i = 1, \dots, M \), where \( m_i \) is the local minimum point in the domain. Our goal is to examine how the set \( \mathcal{M} \), and the corresponding local minimum points \( m_i \), are distributed across the domain \( \Xi \), which in this example is \( I \times J \).

\noindent From this setup, we proceed as follows:

\begin{enumerate}
    \item we analyse the set \( \mathcal{M} \) to determine how well the points \( m_i \) follow a uniform distribution. This is done using a chi-square test with respect to the optimisation parameters \( x, y \);
    \item we identify the optimisation parameter with the smallest p-value, indicating the one that deviates most from a uniform distribution. If there are ties, we choose the first one. For this example, let this parameter be \( y \). Therefore, the points \( m_i \) exhibit a non-uniform distribution with respect to the variable \( y \);
    \item to create an uneven division, we calculate the 70th percentile of the optimisation parameter found in step 2, which in this example would give a specific value \( \bar{y}_m \in J \);
    \item we divide the domain of the optimisation parameter \( y \) based on the value \( \bar{y}_m \) calculated in step 2. In this case:
    \[
    J = J_1 \cup J_2 \quad \text{with} \, \, J_1 = [c, \bar{y}_m] \quad \text{and} \, \, J_2 = [\bar{y}_m, d];
    \]
    \item finally, we divide the entire domain \( \Xi \) into two subregions using the division from step 4. For our example \( \Xi = I \times J = [a, b] \times [c, d] \), we define the new subregions as:
    \[
    \Xi_1 = I \times J_1 \quad \text{and} \quad \Xi_2 = I \times J_2.
    \]
\noindent   Note that \( \Xi_1 \cup \Xi_2 = \Xi \), ensuring that no part of the domain is excluded during the Lattice algorithm. \\
\end{enumerate}
Figure \ref{fig: Example Lattice} shows a possible outcome of this example where \( \text{NoL} = 3 \).

\begin{figure}[H]
    \centering
    \begin{subfigure}[b]{0.61\textwidth}
        \centering
        \includegraphics[width=\textwidth]{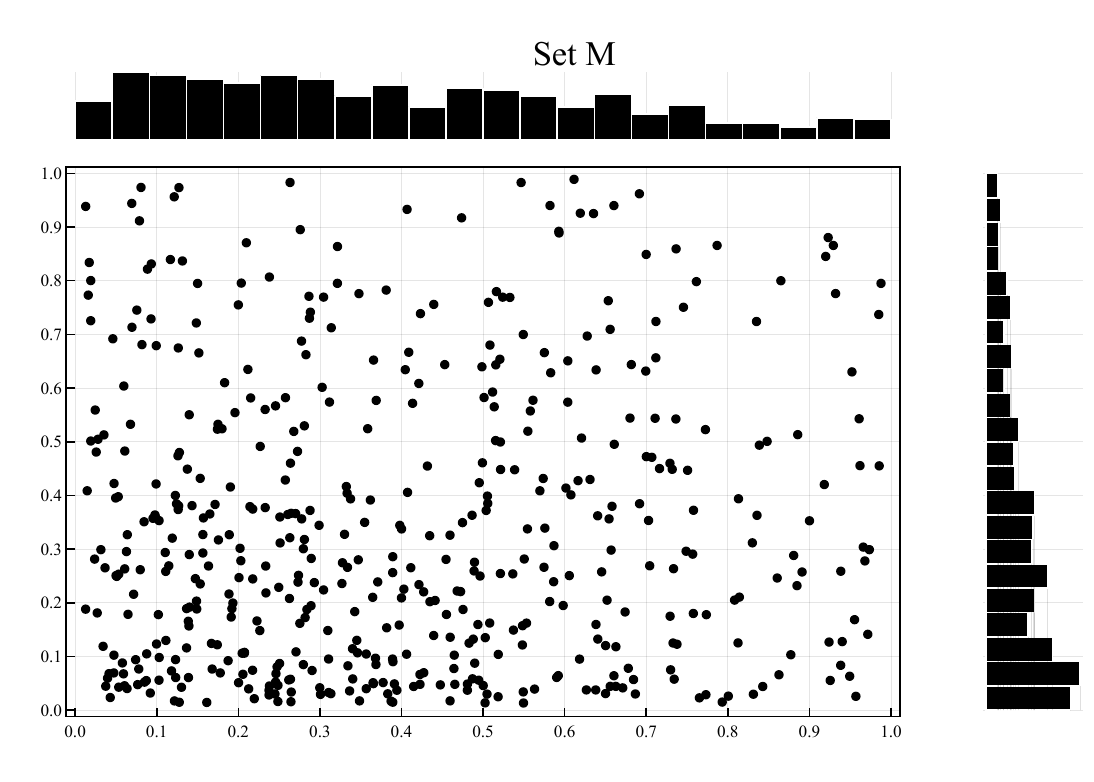}
    \end{subfigure}

    \vfill
    \begin{subfigure}[b]{0.47\textwidth}
        \centering
        \includegraphics[width=\textwidth]{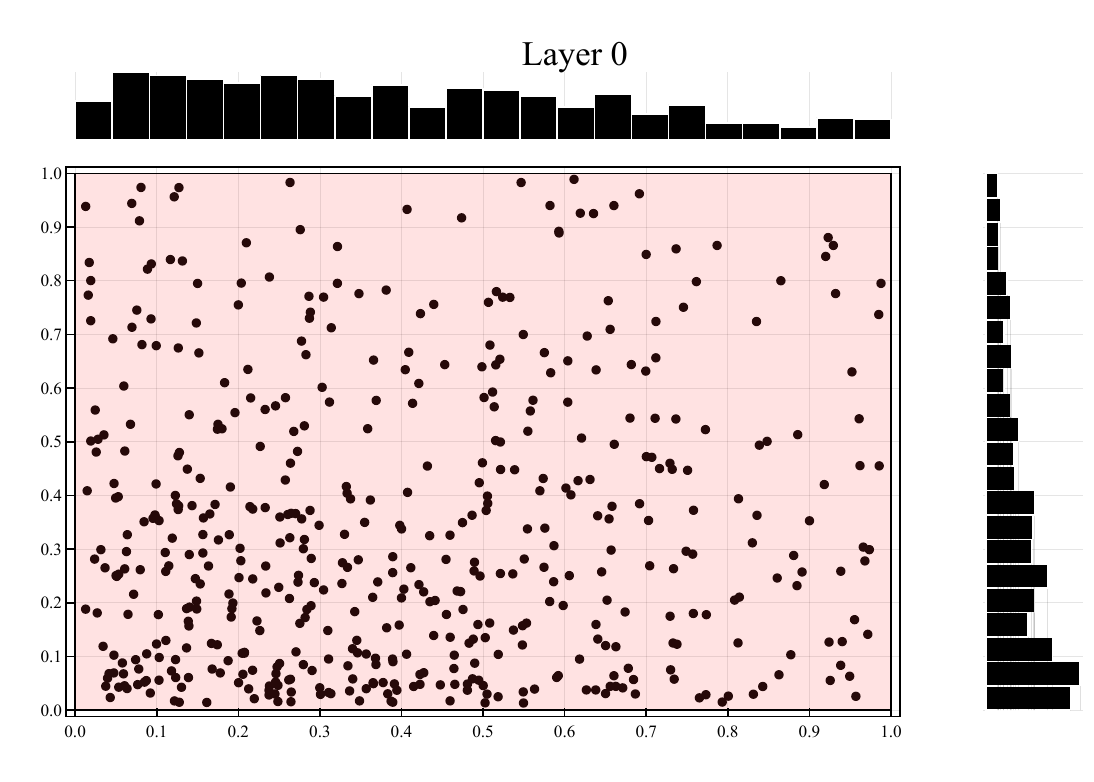}
    \end{subfigure}
    \hfill
    \begin{subfigure}[b]{0.47\textwidth}
        \centering
        \includegraphics[width=\textwidth]{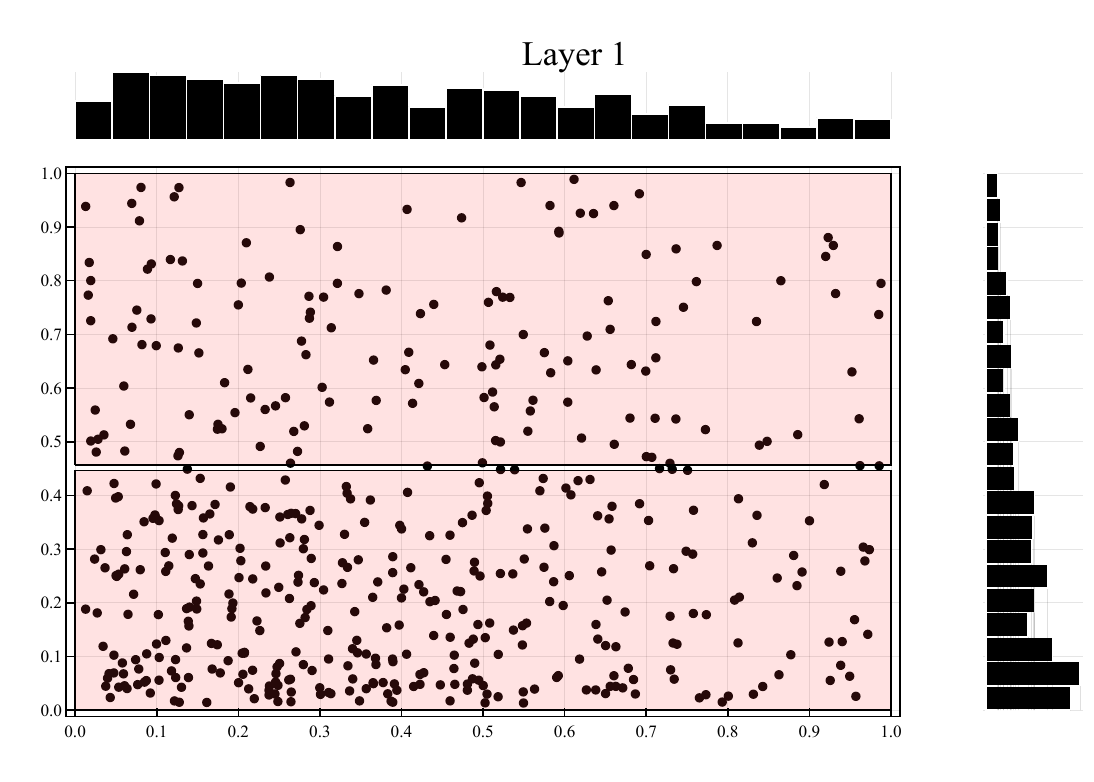}
    \end{subfigure}

    \vfill
    \begin{subfigure}[b]{0.47\textwidth}
        \centering
        \includegraphics[width=\textwidth]{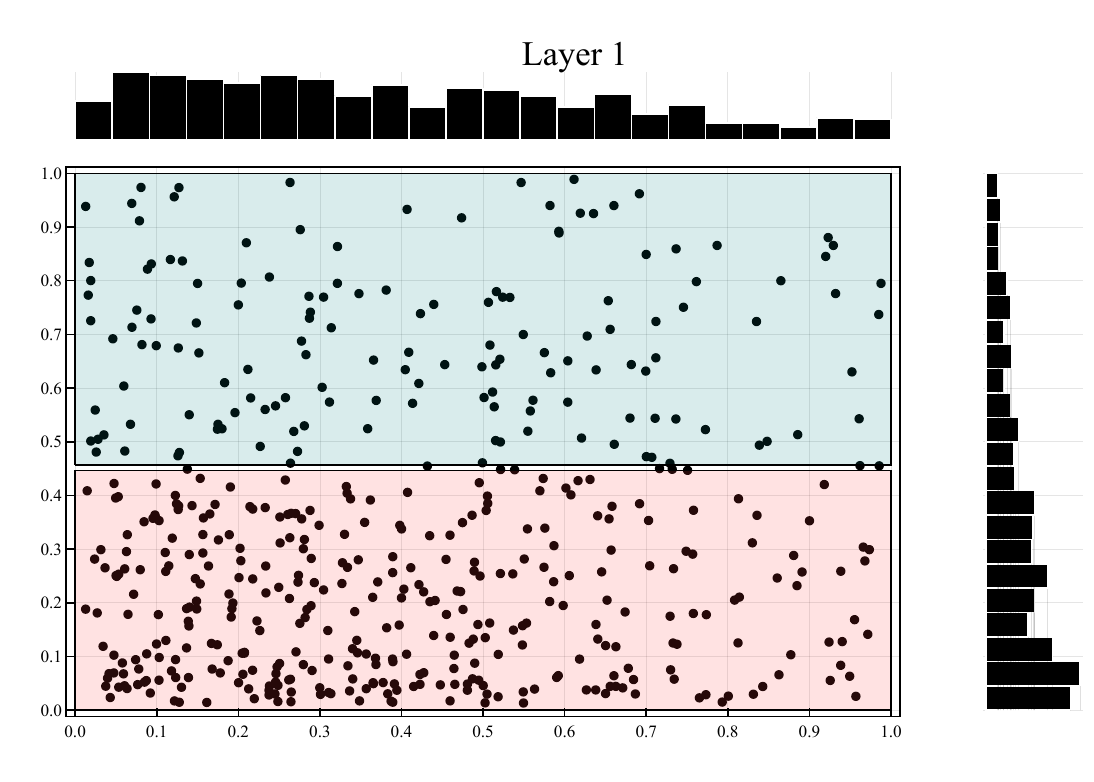}
    \end{subfigure}
    \hfill
    \begin{subfigure}[b]{0.47\textwidth}
        \centering
        \includegraphics[width=\textwidth]{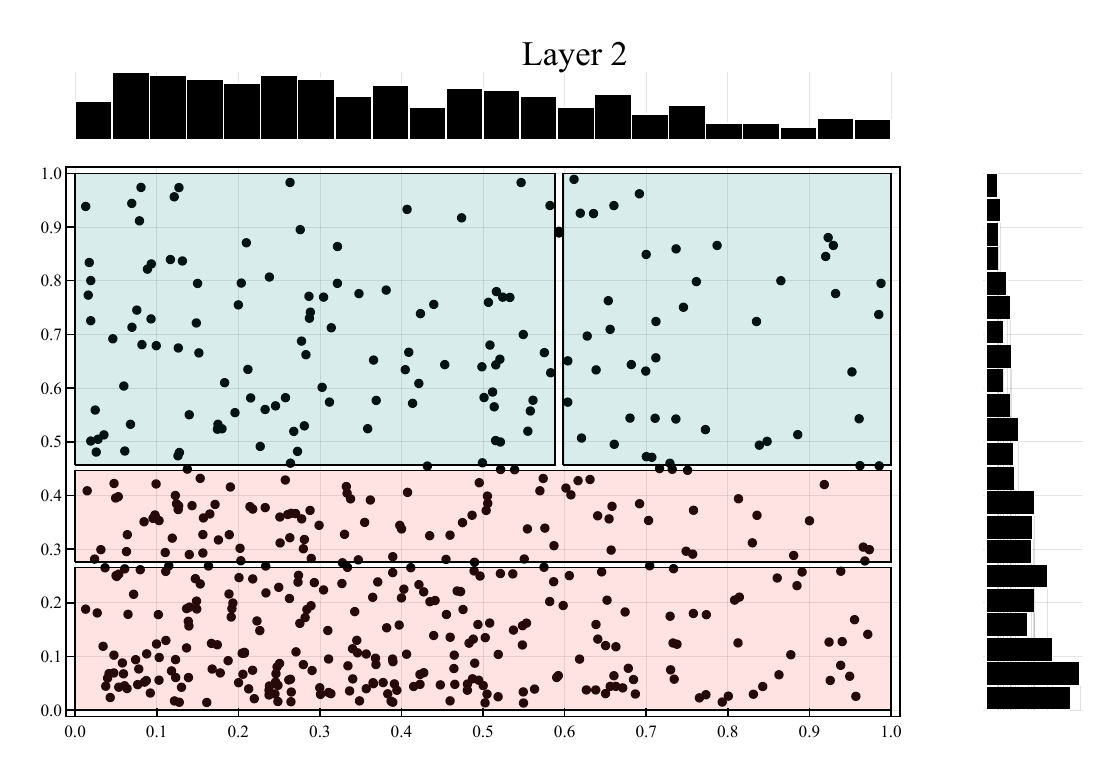}
    \end{subfigure}

    \vfill
    \begin{subfigure}[b]{0.48\textwidth}
        \centering
        \includegraphics[width=\textwidth]{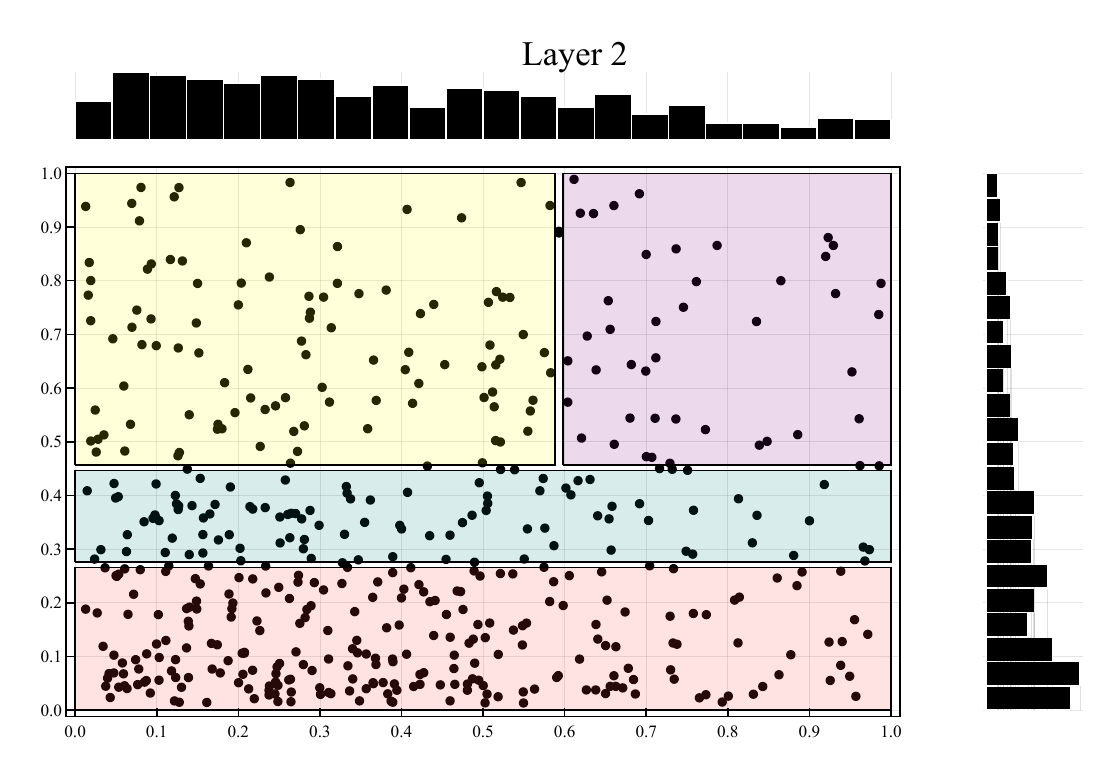}
    \end{subfigure}
    \hfill
    \begin{subfigure}[b]{0.48\textwidth}
        \centering
        \includegraphics[width=\textwidth]{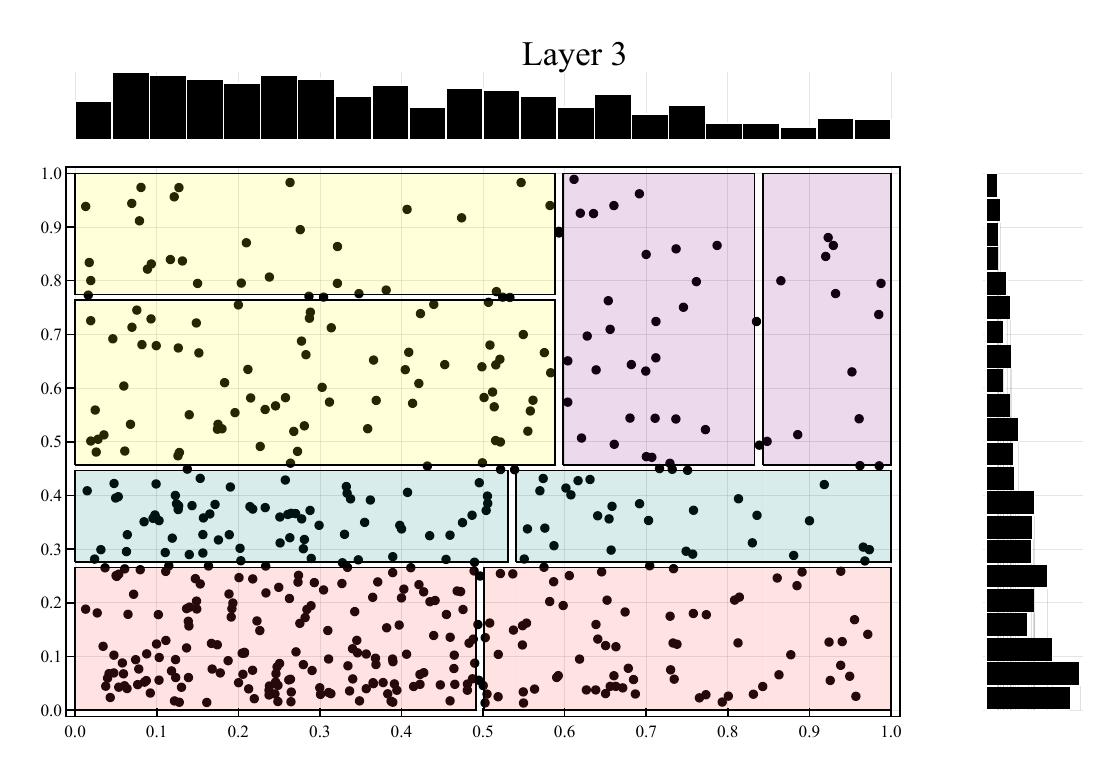}
    \end{subfigure}

    \caption{Visualisation of a two-dimensional example of the Lattice algorithm. The first figure displays the set $M$ along with the distributions of the variables. The subsequent images illustrate the progressive division of the domain at each layer, following the process described in Example \ref{Example lattice}.}
    \label{fig: Example Lattice}
\end{figure}

\end{ex}

\noindent The main concept behind the Lattice algorithm is summarized in Figure~\ref{fig: Lattice scheme}, which illustrates the first two layers for simplicity:

\begin{figure}[H]
\centering
\includegraphics[scale=0.6]{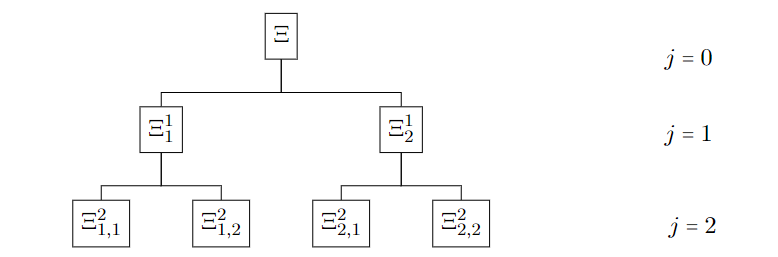}
\vspace{0.2 cm}
\caption{\scriptsize{Lattice Scheme}}
\label{fig: Lattice scheme}
\end{figure}

\vspace{-0.25 cm}

\noindent The Lattice algorithm is shown in Algorithm~\ref{alg: lattice}, where it is implemented as a recursive function. Algorithm~\ref{alg: One layer} provides the steps to perform the division for just one layer.

\begin{algorithm}[H]
    \caption{One layer Algorithm}
    \label{alg: One layer}
    \begin{algorithmic}[1]
    \Function{one\_layer\_function}{$\Xi$}
        \State \textbf{Input:} A domain $\Xi$.
        \State \textbf{Output:} Two subregions $\Xi_1$ and $\Xi_2$, with $\Xi_{1} \bigcup \Xi_{2} = \Xi$.
        \State\textbf{Step 1.} Find the p-value ($p$) and column index (min\_p\_index) for the column with the minimum p-value in domain $\Xi$. If multiple minimum p-values exist, pick first.
        \State\textbf{Step 2.} Calculate the 70th percentiles for the column with the minimum p-value
        \State\textbf{Step 3.} Divide min\_p\_index-column into two at the 70th percentiles, calling the new two columns $\Xi_1$ and $\Xi_2$.
        \State\textbf{Step 4.} \Return{$\Xi_1$ and $\Xi_2$}.
    \EndFunction
    \end{algorithmic}
\end{algorithm}

\vspace{-0.25 cm}

\begin{algorithm}[H]
\caption{Lattice Algorithm}
\label{alg: lattice}
\begin{algorithmic}[1]
\Procedure{lattice\_function}{$\Xi$, \text{NoL}, Output}
    \If{$j = \text{NoL}$}
        \State \Return{Output}
    \Else
        \State\textbf{Step 1.}  $\Xi_1$, $\Xi_2$ $\gets$ OUTPUT $\{$\Call{one\_layer\_function}{$\Xi$}$\}$
        \State\textbf{Step 2.}  Output[$j$] $\gets$ OUTPUT $\{$\Call{lattice\_function}{$\Xi_1$, $j + 1$, Output} $\cup$ \Call{lattice\_function}{$\Xi_2$, $j + 1$, Output}$\}$
        \State\textbf{Step 3.}  $j \gets j + 1$
        \State\textbf{Step 4.}  \Return{Output}
    \EndIf
\EndProcedure
\end{algorithmic}
\end{algorithm}

\section{Solutions of the symmetrical $n$-body problem}
\label{sec: Solutions of the symmetrical $n$-body problem}

The combination of the MP-AIDEA algorithm (described in Section \ref{MP-AIDEA}) and the Lattice algorithm (outlined in Section \ref{Lattice}) is highly effective. In particular, to execute the Lattice algorithm, one needs a set of possible local minimum points, denoted as \( \mathcal{M} = \{m_i\}_i^M \), with \( M < + \infty \). Rather than running an algorithm multiple times to construct this hypothetical set of minimum points, we take advantage of the features of MP-AIDEA. Specifically, we use the archive \( \textgoth{A} \), which stores all the local minimum points found, as well as the individuals in the population at each restart. Thus, we set \( \mathcal{M} = \textgoth{A} \).

\noindent In this case, the combined MP-AIDEA \& Lattice algorithm is implemented as a recursive function, as shown in Algorithm~\ref{alg:MP-AIDEA_Lattice}. Algorithm~\ref{alg: One generation Algorithm} describes the procedure for performing one generation of the algorithm.

\begin{algorithm}[H]
    \caption{One generation Algorithm}
    \label{alg: One generation Algorithm}
    \begin{algorithmic}[1]
    \State \textbf{Step 0.} Set the number of layers for each generation to $1$, $\text{NoL}= 1$. \\
    Define MP-AIDEA initialisation parameters $\text{MP-AIDEA\_param}$:
    \begin{itemize}
        \item cost function $f$;
        \item the optimisation parameters; set $\beta$ as the number of optimisation parameters;
        \item $\Xi \subset \mathbb{R}^{\beta}$, the domain of optimisation parameters;
        \item MP-AIDEA options.
    \end{itemize}
     \Function{one\_generation\_function}{$\text{MP-AIDEA\_param}, \Xi$}
    \State \textbf{Step 1.} Set initial values for MP-AIDEA's parameters in $\Xi$.
    \State \textbf{Step 2.} Run MP-AIDEA Algorithm and set $\mathcal{M} = \textgoth{A}$.
    \State \textbf{Step 3.} Analyse the set $\mathcal{M}$ to collect all critical points. Hence, given a tolerance $\text{tol} \ll 1$, 
    \begin{equation}
    \label{eq: for minimum}
    m_i \text{ is considered a critical point of } f \text{ if } \nabla f(m_i) \leq \text{tol}. 
    \end{equation}
    \State \textbf{Step 4.} Run the Lattice algorithm on the set $\mathcal{M}$, therefore set $\Xi = \mathcal{M}$ 
    \begin{equation*}
        \Xi_1, \Xi_2 \gets \text{OUTPUT} \{\Call{one\_layer\_function}{\Xi}\}\}
    \end{equation*} 
    and find $\Xi_1$ and $\Xi_2$.
    \State\textbf{Step 5.} \Return{MP-AIDEA\_Output, $\Xi_1$ and $\Xi_2$}.
    \EndFunction
    \end{algorithmic}
\end{algorithm}

\begin{algorithm}[H]
    \caption{MP-AIDEA \& Lattice Algorithm}
    \label{alg:MP-AIDEA_Lattice}
    \begin{algorithmic}[1]
    \State \textbf{Step 0.} Set $\text{NoL} \in \mathbb{N}$ as the number of generations and set the number of layers for each generation, $\text{NoL}= 1$. Define MP-AIDEA initialisation parameters $\text{MP-AIDEA\_param}$, therefore:
    \begin{itemize}
        \item[(i)] cost function $f$;
        \item[(ii)] the optimisation parameters; set $\beta$ as the number of optimisation parameters;
        \item[(iii)]\label{MP-AIDEA 3} $\Xi \subset \mathbb{R}^{\beta}$, the domain of optimisation parameters;
        \item[(iv)] MP-AIDEA options.
    \end{itemize}
\Procedure{MP-AIDEA\_lattice\_function}{$\Xi$, $\text{MP-AIDEA\_param}$, $\text{NoG},$ Output}
    \If{$j = \text{NoG}$}
        \State \Return{Output}
    \Else
        \State\textbf{Step 1.} Run \Call{one\_generation\_function}{MP-AIDEA\_param, $\Xi$} and find as output 
        \begin{itemize}
            \item MP-AIDEA\_Output;
            \item $\Xi_1$;
            \item $\Xi_2$.
        \end{itemize}
        Set $\tilde{\Xi} \coloneq \Xi_1 \bigcup \Xi_2$. \\
        In $\text{MP-AIDEA\_param}$ (i)-(iv), update the domain, MP-AIDEA's parameter number (iii), with $\tilde{\Xi}$.
        \State\textbf{Step 2.}  Output[$j$] $\gets$ OUTPUT $\{$\Call{MP-AIDEA\_lattice\_function}{$\tilde{\Xi}$, \text{MP-AIDEA\_updated\_param} $j$, Output}
        \State\textbf{Step 3.}  $j \gets j + 1$
        \State\textbf{Step 4.}  \Return{Output}
    \EndIf
\EndProcedure
    \end{algorithmic}
\end{algorithm}

\begin{obs*}
In this initial approach, we do not consider any exit flag for the MP-AIDEA algorithm. Note that \textbf{Step 3} of Algorithm \ref{alg: One generation Algorithm} is crucial, as it is possible that some subregions of the domain do not contain actual minimum points, even though the archive \( \textgoth{A} \) is not empty.
\end{obs*}

\begin{obs*}
Furthermore, for each layer processed by MP-AIDEA, the subregions are independent of one another. This allows the algorithm to run in parallel across layers, thus reducing the overall runtime.
\end{obs*}

\section{Further analysis of critical points}
\label{sec:Analysing the critical points}

Next, we focus on evaluating the functional stability of the problem. The orbit trajectories identified in the previous phase are treated as critical points, which allows us to analyse the stability or instability in their respective neighbourhoods. In Section \ref{sec: Discrete Morse index}, we explore a method to determine the nature of the critical points found in Section \ref{sec:Finding the critical points} by calculating their discrete Morse index, as proposed in the Section \textit{A study on the discrete Morse index and the Floquet index of the equivariant gravitational n-body problem} in \cite{IP24}. Additionally, in Section \ref{sec: Intra and trans level distances}, we present a graphical representation of the problem's structure using \textit{intra-level} distances, \( \rho_{\mathrm{IL}} \), and \textit{trans-level} distances, \( \rho_{\mathrm{TL}_k} \), between the local minimum points, as introduced in \cite{Vasile2}.

\noindent It is worth noting that another approach to analysing the local minimality of the solutions found in this variational setting is through conjugate point theory. This method allows one to determine whether the solutions are local minimisers of the action functional over the space of periodic loops. For instance, in \cite{Fenucci2022}, conjugate point theory was applied to study the local minimality properties of circular orbits in $1/r^\alpha$ potentials and the figure-eight solution of the three-body problem. While we do not employ this approach in our current analysis, the results in \cite{Fenucci2022} align with the ones presented in this paper, providing a broader perspective on the study of local minimisers in variational problems.

\subsection{Discrete Morse index}
\label{sec: Discrete Morse index}

Following \cite{IP24}, we define:

\begin{defn}[Computational discrete Morse index]
\label{def 2: Discrete Morse index}
    The \textit{index} of a non-degenerate critical point \( p \) of \( f: H \to \mathbb{R} \) of class \( \mathcal{C}^2 \) is the dimension of the largest subspace of the tangent space to \( M \) at \( p \) on which the Hessian is negative definite. Therefore, the \textit{discrete Morse index} \( \tilde{n}^{-} \) of a critical point \( p \) is defined as the number of negative eigenvalues of the Hessian matrix \( H_f(p) \).
\end{defn}

\noindent This definition can now be applied to the critical points identified in Section \ref{sec:Finding the critical points}. The discrete Morse index can be computed either on the fundamental domain \( \mathbb{I} \) or, after constructing the orbit using its symmetries, on the entire domain. Clearly, the index calculated on the fundamental domain will be less than or equal to the index calculated on the entire orbit.

\paragraph{\textit{On the fundamental domain}} Following \cite{symorb} and Section \ref{Discretisation through Fourier coefficients}, the Hessian matrix of the action functional can be computed on the fundamental domain \( \mathbb{I} \) using discretisation via Fourier coefficients, as described in \eqref{eq:f2}. To do so, we approximate the solution as in equation \eqref{eq:F_tronc} and substitute this into the action functional defined in \eqref{action functional}, leading to the expression in \eqref{eq:f2}. 

\paragraph{\textit{On the entire orbit}} Alternatively, to calculate the discrete Morse index on the entire orbit, the Hessian matrix of the action functional must be written differently. Following \cite{IP24} and Section \ref{Discretisation through points}, we compute the Hessian matrix using discretisation through points, as in \eqref{Approximated action functional}, with specific considerations regarding the initial and final points. Since the initial and final points of a periodic orbit coincide, only the initial point is considered. \\

\noindent In both cases, the Morse index of a critical point is the number of negative eigenvalues of the Hessian matrix calculated in that point. Therefore, each critical point (i.e., minimum points and saddle points) is assigned to its discrete Morse index. The indices for minimum points and saddle points are \( \tilde{n}^{-}(p) = 0 \) and \( \tilde{n}^{-}(p) > 0 \), respectively.

\subsection{Intra and trans level distances}
\label{sec: Intra and trans level distances}
\noindent In this section, we categorise the minimum points found in Section \ref{sec:Finding the critical points} based on the value of their objective function, which in our case is the action functional, following the approach in \cite{Vasile2}. We refer to these categories as \textit{levels}. Each level groups local minima with very similar action values, providing a structured way to analyse their distribution.  

\noindent For each level, we define the \textit{intra-level} distance \( \rho_{\mathrm{IL}} \) as the average of the relative distances between each local minimum and all other minima within the same level. Additionally, the \textit{trans-level} distance \( \rho_{\mathrm{TL}_k} \) quantifies the average relative distance between each local minimum and all other minima in the next lower level. For the lowest level, \( \rho_{\mathrm{TL}_k} \) is computed as the average distance from the best-known solution.  

\noindent We now provide a more precise definition of these concepts.

\begin{defn}[Intra and trans level distances]
\label{def: Intra and trans level distances}
Let \( \mathcal{M} = \{x_k\}_{k=1}^M \), with \( M < + \infty \), be the set of possible local minimum points found during the optimisation step. For each \( k \), we consider the value of the objective function in each minimum point \( f_k = f(x_k) \); then, let \( \varepsilon > 0 \) be a suitable small number. We build an interval \( I_k = [f_k - \varepsilon, f_k + \varepsilon] \). For each interval \( I_k \), we have the following set:
\begin{equation*}
    \mathfrak{m}_{I_k} \coloneq \{x_i \in \mathcal{M} \mid f(x_i) \in I_k \}.
\end{equation*}
We define the \textit{intra-level} distance \( \rho_{\mathrm{IL}} \) of a minimum point \( x_i \) as
\begin{equation}
    \label{eq: Intra level distance}
    \rho_{\mathrm{IL}}(x_i) \coloneq \dfrac{1}{|\mathfrak{m}_{I_i}|}\,\, \sum_{\substack{x_j \in \, \mathfrak{m}_{I_i} \\ x_j \neq x_i}} 
    \|x_i - x_j\| \,\, \text{with} \,\, x_i \in \mathfrak{m}_{I_i}.
\end{equation}
Then, we order the set \( \{f_k\}_k \) such that \( f_{k-1} < f_k < f_{k+1} \), and define the \textit{trans-level} distance \( \rho_{\mathrm{TL}_k} \) of a minimum point \( x_i \) as
\begin{equation}
\begin{small}
    \label{eq: Trans level distance}
        \rho_{\mathrm{TL}_k}(x_i) \coloneq \begin{dcases} \dfrac{1}{|\mathfrak{m}_{I_k}|}\, \sum_{x_j \in \, \mathfrak{m}_{I_k}} \hspace{-0.8em}
    \|x_i - x_j\|, \quad x_i \in \mathfrak{m}_{I_{k-1}} \,\, \text{for} \,\, k > 1 ,\\
    \dfrac{1}{|\mathfrak{m}_{I_1}|}\, \sum_{x_j \in \, \mathfrak{m}_{I_1}} \hspace{-0.7em}
    \|x_1 - x_j\|,\quad x_1 \,\, \text{is best-known sol}.
\end{dcases}
\end{small}
\end{equation}
For \( k=1 \), in our case, we consider the best-known solution as the one with the lowest norm of the gradient of the action functional.
\end{defn}

\noindent The values of \( \rho_{\mathrm{IL}} \) and \( \rho_{\mathrm{TL}_k} \) provide a clear indication of the diversity among local minimum points and the likelihood of transitioning from one level to another. For example, a cluster of minimum points with a large intra-level distance and a small trans-level distance indicates an easy transition to lower objective function values, suggesting a potential underlying funnel structure. For more details, see \cite{Vasile2, funnel_structure}.

\section{Mountain Pass Solutions}
\label{sec:Mountain pass solutions}

The Mountain Pass Theorem addresses the existence of critical points that are not strict local minimisers for a functional \( f \) (for a comprehensive theory on this topic, see \cite{Ambrosetti, Pucci}). An algorithm providing a constructive proof for the Ambrosetti-Rabinowitz Mountain Pass Theorem can be found in \cite{BT04}. In this Section, an improved version of this algorithm is presented.

\noindent Let \( \eta_f : \mathbb{R}_{+} \times X \rightarrow X \) represent the steepest descent flow associated with the functional \( f \), defined as the solution to the Cauchy problem:
\begin{equation}
\label{steepest descent flow}
\begin{cases}
    \dfrac{\text{d}}{\text{d}t} \eta_f (t,x) = - \dfrac{\nabla f(\eta_f (t,x))}{1 + \|\nabla f(\eta_f (t,x))\|}, \\
    \eta_f (0,x) = x.
\end{cases}
\end{equation}
We say a subset \( X_0 \subset X \) is \textit{positively invariant} for the flow \( \eta_f \) if \( \{\eta_f (t,x_0), t \geq 0\} \subset X_0 \) for every \( x_0 \in X_0 \). The \textit{\( \omega \)-limit} of \( x \in X \) for the flow \( \eta_f \) is the closed positively invariant set:
\begin{equation}
\label{omega limit}
\omega_x \coloneq \left\{ \lim_{t_n \to +\infty} \eta_f(t_n, x) : (t_n)_n \subset \mathbb{R}_{+} \right\}.
\end{equation}
Let \( c \in \mathbb{R} \) be such that the sublevel \( f^c \) is disconnected. We denote its disjoint connected components as \( (F_i^c)_i \):
\begin{equation*}
    f^c = \bigcup_i F_i^c, \quad F_i^c \cap F_j^c = \emptyset \quad \text{for} \quad i \neq j. 
\end{equation*}
For each index \( i \), we define the \textit{basin of attraction} of the set \( F_i^c \) as
\begin{equation*}
    \mathcal{F}_i^c \coloneq \{ x \in X : \omega_x \subset F_i^c \}.
\end{equation*}

\begin{defn}[Path]
\label{def: path}
A \textit{path} is a continuous function \( \gamma : [0,1] \rightarrow X \). Given two points \( x_1, x_2 \in X \) with \( x_1 \neq x_2 \), we define the set of paths joining \( x_1 \) to \( x_2 \) as
\begin{equation*}
\label{path}
\Gamma_{x_1, x_2} \coloneq \{ \gamma \in \mathcal{C}([0,1], X) : \gamma(0) = x_1, \gamma(1) = x_2 \}.
\end{equation*}
\end{defn}

\noindent The following can be established:
\begin{thm}[Theorem 1.4, \cite{BT04}]
\label{Thm 1.4}
    Let \( f^c \) be a disconnected sublevel for the functional \( f \). Let \( F_i^c \) be the disjoint connected components of \( f^c \) and \( \mathcal{F}_i^c \) their basins of attraction. For \( x_i \in F_i^c, i = 1,2 \) and \( \gamma \in \Gamma_{x_1, x_2} \), there exists \( \bar{x} \in \gamma([0,1]) \cap \partial \mathcal{F}_1^c \).
\end{thm}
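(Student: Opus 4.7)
The plan is to split the proof into two logically separate pieces: first, I would verify that the endpoint assignments match the basins, namely $x_1 \in \mathcal{F}_1^c$ and $x_2 \notin \mathcal{F}_1^c$; second, I would carry out a purely topological connectedness argument on the continuous image $\gamma([0,1])$.

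For the first step, the key observation is that the flow \eqref{steepest descent flow} is strictly dissipative for $f$: differentiating along an orbit yields
\[
\frac{\mathrm{d}}{\mathrm{d} t} f(\eta_f(t,x)) \;=\; -\,\frac{\|\nabla f(\eta_f(t,x))\|^2}{1+\|\nabla f(\eta_f(t,x))\|} \;\leq\; 0,
\]
so any trajectory starting in the sublevel $f^c$ remains in $f^c$ for all positive times. The forward orbit $\{\eta_f(t,x_i) : t \geq 0\}$ is the continuous image of the connected set $[0,+\infty)$, hence a connected subset of $f^c$ passing through $x_i \in F_i^c$; it therefore lies entirely inside the connected component $F_i^c$. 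Its $\omega$-limit is contained in the closure of the orbit, hence in $\overline{F_i^c} = F_i^c$, since connected components of the closed set $f^c$ are themselves closed in $X$. This gives $x_i \in \mathcal{F}_i^c$ for $i=1,2$; the disjointness $F_1^c \cap F_2^c = \emptyset$ forces $\mathcal{F}_1^c \cap \mathcal{F}_2^c = \emptyset$, so in particular $x_2 \notin \mathcal{F}_1^c$.

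For the second step, set $A \coloneq \mathcal{F}_1^c$ and suppose, towards a contradiction, that $\gamma([0,1]) \cap \partial A = \emptyset$. Using the identity $\partial A = \overline{A} \cap \overline{X\setminus A}$, the two sets $\gamma([0,1]) \cap \overline{A}$ and $\gamma([0,1]) \cap \overline{X\setminus A}$ are closed in $\gamma([0,1])$ and disjoint; their union covers $\gamma([0,1])$ since $\overline{A}\cup\overline{X\setminus A} = X$. The former contains $x_1$ and the latter contains $x_2$, so both are non-empty, contradicting connectedness of $\gamma([0,1])$ as the continuous image of $[0,1]$. Hence the desired $\bar{x} \in \gamma([0,1]) \cap \partial \mathcal{F}_1^c$ must exist.

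The main subtlety I anticipate concerns the topology of the basin $\mathcal{F}_1^c$: it would be tempting to appeal to openness of basins of attraction, but such openness is not a priori transparent at the level of generality used in the paper (one would typically need $F_i^c$ to be an isolated attractor, plus some compactness on the flow). The symmetric formulation above, phrased in terms of the closures of both $A$ and $X \setminus A$ rather than interiors, bypasses this issue entirely and reduces everything to the connectedness of $\gamma([0,1])$.
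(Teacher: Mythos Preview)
The paper does not supply its own proof of this statement; Theorem~\ref{Thm 1.4} is merely quoted from \cite{BT04} without argument, so there is nothing in the present paper to compare against. Your proof is correct and follows the natural strategy one would expect: the dissipation identity keeps the forward orbit of each $x_i$ inside the closed component $F_i^c$, which places $x_1$ inside $\mathcal{F}_1^c$ and $x_2$ outside it, and then the connectedness of $\gamma([0,1])$ forces an intersection with $\partial\mathcal{F}_1^c$. Your device of phrasing the separation via the pair $\overline{A}$, $\overline{X\setminus A}$ rather than appealing to openness of the basin is a clean way to sidestep any question of whether $\mathcal{F}_1^c$ is open.

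One small point worth flagging: the step ``$F_1^c\cap F_2^c=\emptyset$ forces $\mathcal{F}_1^c\cap\mathcal{F}_2^c=\emptyset$'' tacitly uses that $\omega_{x_2}\neq\emptyset$. Under the literal set-theoretic reading of the definition $\mathcal{F}_i^c=\{x:\omega_x\subset F_i^c\}$, a point with empty $\omega$-limit would lie in every basin simultaneously, and your conclusion $x_2\notin\mathcal{F}_1^c$ would fail. In the framework of \cite{BT04} this is not an issue (either one reads ``$\omega_x\subset F_i^c$'' as implicitly requiring $\omega_x\neq\emptyset$, or the ambient compactness/Palais--Smale hypotheses guarantee non-empty $\omega$-limits for bounded decreasing orbits), so this is a matter of stating the standing assumptions rather than a defect in the argument.
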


\begin{cor}[Corollary 1.5, \cite{BT04}]
\label{cor 1.5}
Under the same conditions as Theorem \ref{Thm 1.4}, let \( \bar{x} \in \gamma([0,1]) \cap \partial \mathcal{F}_1^c \). Then \( f(\omega_{\bar{x}}) \geq c \) and there exists a sequence \( (x_n)_n = \eta_f(t_n, \bar{x}) \subset \partial \mathcal{F}_1^c \) such that
    \begin{equation*}
        \lim_{n \to + \infty} \nabla f(x_n) = 0 \quad \text{and} \quad \lim_{n \to + \infty} f(x_n) = f(\omega_x).
    \end{equation*}
\end{cor}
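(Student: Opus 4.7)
My plan is to split the argument into two parts matching the two claims: the value inequality $f(\omega_{\bar{x}}) \geq c$ and the existence of the Palais--Smale-type sequence on the boundary.

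First I would prove $f(\omega_{\bar{x}}) \geq c$ by contradiction. Assuming $f(\omega_{\bar{x}}) < c$, pick $y \in \omega_{\bar{x}}$ with $f(y) < c$. By continuity of $f$ near $y$ and the defining property \eqref{omega limit} of $\omega_{\bar{x}}$, there is a time $t_0$ at which $\eta_f(t_0, \bar{x})$ lies in the open sublevel $\{f < c\} \subset f^c$; since $f$ is non-increasing along the flow, as shown by the elementary computation
\begin{equation*}
    \frac{d}{dt} f(\eta_f(t, \bar{x})) = -\frac{\|\nabla f(\eta_f(t,\bar{x}))\|^{2}}{1+\|\nabla f(\eta_f(t,\bar{x}))\|} \leq 0,
\end{equation*}
the orbit stays in $f^c$ for all $t \geq t_0$. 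By connectedness of the continuous orbit and disjointness of the components $F_i^c$, its tail is confined to a single $F_j^c$, forcing $\omega_{\bar{x}} \subset F_j^c$ and thus $\bar{x} \in \mathcal{F}_j^c$. This contradicts $\bar{x} \in \partial \mathcal{F}_1^c$: if $j = 1$ then $\bar{x}$ lies in the open basin $\mathcal{F}_1^c$ rather than on its boundary; if $j \neq 1$, an open neighbourhood of $\bar{x}$ lies in $\mathcal{F}_j^c$, which is disjoint from $\mathcal{F}_1^c$, contradicting $\bar{x} \in \overline{\mathcal{F}_1^c}$.

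For the second claim the key intermediate step is to establish that $\partial \mathcal{F}_1^c$ is positively invariant under $\eta_f$. If $\bar{x} = \lim_n y_n$ with $y_n \in \mathcal{F}_1^c$, then continuity of the flow in the initial datum gives $\eta_f(t,\bar{x}) = \lim_n \eta_f(t, y_n)$; since the basins are themselves positively invariant (because $\omega_{\eta_f(s,y)} = \omega_y$ for every $s \geq 0$), each $\eta_f(t, y_n)$ still lies in $\mathcal{F}_1^c$, so $\eta_f(t, \bar{x}) \in \overline{\mathcal{F}_1^c}$. On the other hand $\omega_{\eta_f(t,\bar{x})} = \omega_{\bar{x}} \not\subset F_1^c$, which rules out $\eta_f(t,\bar{x}) \in \mathcal{F}_1^c$; hence $\eta_f(t,\bar{x}) \in \partial \mathcal{F}_1^c$ for every $t \geq 0$. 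The non-increasing quantity $f(\eta_f(t,\bar{x}))$ converges as $t \to \infty$ to $f(\omega_{\bar{x}})$, which is well defined because $f$ is constant on $\omega$-limit sets of gradient-type flows. Integrating the energy identity then yields
\begin{equation*}
    \int_{0}^{+\infty} \frac{\|\nabla f(\eta_f(t,\bar{x}))\|^{2}}{1+\|\nabla f(\eta_f(t,\bar{x}))\|} \, dt \;=\; f(\bar{x}) - f(\omega_{\bar{x}}) \;<\; +\infty,
\end{equation*}
and the finiteness of this integral forces a sequence $t_n \to +\infty$ along which the integrand vanishes, so $\nabla f(\eta_f(t_n,\bar{x})) \to 0$. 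Setting $x_n \coloneq \eta_f(t_n, \bar{x})$, positive invariance places $x_n \in \partial \mathcal{F}_1^c$, and $f(x_n) \to f(\omega_{\bar{x}})$ follows from the monotone convergence of $f$ along the orbit.

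The main obstacle I expect is the positive invariance of $\partial \mathcal{F}_1^c$: it rests on the continuous dependence of the flow on initial data and on the openness of the basins $\mathcal{F}_i^c$, both of which require mild regularity hypotheses on $f$ (for instance $f \in \mathcal{C}^1$ with locally Lipschitz gradient), together with the standing assumption that $c$ is not a critical value separating basin boundaries pathologically. A secondary subtlety is that $\omega_{\bar{x}}$ must be nonempty and $f$ must be genuinely constant on it; in infinite-dimensional settings this typically requires precompactness of the forward orbit under a Palais--Smale-type condition, which in the $n$-body application follows from the coercivity and regularity framework already imposed on the symmetry-reduced action $\mathcal{A}_{\I}$.
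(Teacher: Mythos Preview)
The paper does not supply its own proof of this corollary: it merely quotes the statement from \cite{BT04} and uses it as a black box. There is therefore nothing in the present paper to compare your argument against. That said, your proposal follows exactly the standard route one expects for this result---contradiction via openness of basins for the level inequality, positive invariance of $\partial\mathcal{F}_1^c$ via continuous dependence plus $\omega_{\eta_f(s,y)}=\omega_y$, and the integrated energy identity to extract a sequence with vanishing gradient---and is essentially the argument given in \cite{BT04}. Your caveats about openness of the basins and nonemptiness of $\omega_{\bar{x}}$ are the right places to be careful, but they are handled by the standing $\mathcal{C}^2$ hypothesis on $f$ and the Palais--Smale framework invoked later in the paper.
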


\begin{cor}[Corollary 1.6, \cite{BT04}]
\label{cor 1.6}
Under the same conditions as Theorem \ref{Thm 1.4}, let \( \bar{x} \in \gamma([0,1]) \cap \partial \mathcal{F}_1^c \). Then there exists a sequence \( (\tilde{y}_n)_n \subset X \), \( (\tilde{y}_n)_n \coloneq \eta_f(\tilde{T}_n, x_1^n) \) such that
    \begin{equation*}
        \lim_{n \to + \infty} \nabla f(\tilde{y}_n) = 0 \quad \text{and} \quad c \leq f(\tilde{y}_n) \leq f(\bar{x}), \,\, \forall n \in \mathbb{N}.
    \end{equation*}
\end{cor}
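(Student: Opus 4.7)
The plan is to transport the Palais--Smale-type sequence produced in Corollary~\ref{cor 1.5} from the boundary $\partial \mathcal{F}_1^c$ into the interior of $\mathcal{F}_1^c$, using continuous dependence of the flow $\eta_f$ on initial data. Corollary~\ref{cor 1.5} already yields $x_n \coloneq \eta_f(t_n, \bar{x}) \subset \partial \mathcal{F}_1^c$ with $t_n \to +\infty$, $\nabla f(x_n) \to 0$, and $f(x_n) \to f(\omega_{\bar{x}}) \ge c$; since $f$ is non-increasing along \eqref{steepest descent flow}, we have moreover $c \le f(x_n) \le f(\bar{x})$ for every $n$ (up to taking $t_n$ increasing). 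The purpose of Corollary~\ref{cor 1.6} is to realise such a sequence by flowing from computable interior points $x_1^n \in \mathcal{F}_1^c$, rather than from the inaccessible boundary point $\bar{x}$ itself.

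First, since $\bar{x} \in \partial \mathcal{F}_1^c$, I would pick, for each $n$, a point $x_1^n \in \mathcal{F}_1^c$ with $\|x_1^n - \bar{x}\| < 1/n$. The right-hand side of \eqref{steepest descent flow} is bounded and locally Lipschitz wherever $\nabla f$ is, so a Gronwall-type estimate provides quantitative control of $\|\eta_f(t, x_1^n) - \eta_f(t, \bar{x})\|$ uniformly on the fixed compact interval $[0, t_n]$.

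Next, I would define
\begin{equation*}
\tau_n \coloneq \inf\{\, t \ge 0 \,:\, f(\eta_f(t, x_1^n)) \le c \,\}, \qquad \tilde{T}_n \coloneq \min(\tau_n, t_n),
\end{equation*}
and set $\tilde{y}_n \coloneq \eta_f(\tilde{T}_n, x_1^n)$. Two cases give the required function-value bounds: when $\tilde{T}_n = \tau_n$, continuity of $t \mapsto f(\eta_f(t, x_1^n))$ yields $f(\tilde{y}_n) = c$; when $\tilde{T}_n = t_n$, the Gronwall estimate and continuity of $f$ give $f(\tilde{y}_n)$ arbitrarily close to $f(x_n) \ge c$, while the monotonicity of $f$ along the flow combined with $f(x_1^n) \to f(\bar{x})$ secures $f(\tilde{y}_n) \le f(\bar{x})$ after a harmless subsequence. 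For the gradient, local Lipschitz continuity of $\nabla f$ together with the Gronwall estimate transfers $\nabla f(x_n) \to 0$ to $\nabla f(\tilde{y}_n) \to 0$.

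The main obstacle is coordinating the three requirements simultaneously, especially when the cap $\tilde T_n = t_n$ activates before the sublevel $\{f \le c\}$ is reached: one cannot conclude from the stopping criterion alone and must rely on quantitative flow continuity to transfer the gradient smallness from $x_n$ to $\tilde{y}_n$. A diagonal extraction -- first fixing $n$ (and hence $t_n$), then choosing $x_1^n$ close enough to $\bar{x}$ that the accumulated propagation error on $[0, t_n]$ is negligible compared with $\|\nabla f(x_n)\|$ -- produces the desired sequence.
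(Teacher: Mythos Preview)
Your approach differs substantially from what the paper invokes. The paper does not prove Corollary~\ref{cor 1.6} directly; it cites \cite{BT04}, where the sequence $(\tilde y_n)_n$ is produced \emph{constructively} by a bisection procedure: one repeatedly halves a segment joining points in distinct basins, flows the midpoint forward to detect which side of $\partial\mathcal F_1^c$ it lies on, and the $x_1^n$ arise as the iterates landing on the $\mathcal F_1^c$ side, with $\tilde T_n$ the corresponding flow times. The whole point of that construction---and of the improved Algorithm~\ref{Mountain Pass Algorithm} in the paper---is that it is numerically implementable without prior access to $\bar x$. Your argument instead treats $\bar x$ as known and approximates it abstractly by nearby interior points via continuous dependence of the flow; this is a legitimate and shorter existence proof, but it discards the algorithmic content that motivates the corollary in \cite{BT04}.

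There is, however, a genuine gap in your case analysis. When $\tilde T_n = \tau_n < t_n$ you correctly obtain $f(\tilde y_n)=c$, but your gradient claim does not follow: the Gronwall estimate only tells you $\tilde y_n$ is close to $\eta_f(\tau_n,\bar x)$, and there is no reason $\|\nabla f(\eta_f(\tau_n,\bar x))\|$ is small---along the orbit of $\bar x$ the gradient is small only at the special times $t_n$, not at the earlier time $\tau_n$. The repair is to fold this case into your diagonal step: since $f$ is strictly decreasing along the (non-stationary) orbit of $\bar x$, one has $f(\eta_f(t,\bar x))\ge f(x_n)>c$ for all $t\in[0,t_n]$, so by choosing $x_1^n$ close enough to $\bar x$ (depending on both $t_n$ and the margin $f(x_n)-c$) you can force $f(\eta_f(t,x_1^n))>c$ on $[0,t_n]$, hence $\tau_n>t_n$; the stopping time then never triggers and your Gronwall transfer of $\nabla f(x_n)\to 0$ to $\nabla f(\tilde y_n)\to 0$ applies cleanly. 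With that adjustment the argument is complete.
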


\noindent In \cite{BT04}, the proof of Corollary \ref{cor 1.6} is provided through the construction of a sequence \( (\tilde{y}_n)_n \) via an algorithm referred to as Barutello and Terracini's Algorithm. This sequence \( (\tilde{y}_n)_n \) converges when certain additional compactness conditions are imposed on the functional \( f \).

\noindent In this context, we define the following:

\begin{defn}[Palais-Smale Condition]
\label{def: Palais-Smale condition}
    A sequence \( (x_m)_m \subset X \) is called a \textit{Palais-Smale sequence in the interval \( [a, b] \) for the functional \( f \)} if
    \begin{equation*}
        a \leq f(x_m) \leq b, \quad \forall m \in \mathbb{N}, \quad \text{and} \quad \nabla f(x_m) \xrightarrow{m \to + \infty} 0.
    \end{equation*}
    The functional \( f \) satisfies the \textit{Palais-Smale condition in the interval \( [a, b] \)} if every Palais-Smale sequence \( (x_m)_m \) in the interval \( [a, b] \) for \( f \) has a converging subsequence \( (x_m)_k \to x_0 \in X \). 

    \noindent Similarly, a sequence \( (x_m)_m \subset X \) is called a \textit{Palais-Smale sequence at level \( c \) for the functional \( f \)} if
     \begin{equation*}
        f(x_m) \xrightarrow{m \to + \infty} c \quad \text{and} \quad \nabla f(x_m) \xrightarrow{m \to + \infty} 0.
    \end{equation*}
    The functional \( f \) satisfies the \textit{Palais-Smale condition at level \( c \)}, denoted by \( \text{(PS)}_c \), if every Palais-Smale sequence at level \( c \) for \( f \) has a converging subsequence.
\end{defn}

\begin{obs*}
Corollary \ref{cor 1.5} guarantees the existence of a Palais-Smale sequence at level \( f(\omega_{\bar{x}}) \) for the functional \( f \). If \( f \) satisfies \( \text{(PS)}_{f(\omega_{\bar{x}})} \), then there exists a critical point \( \bar{\bar{x}} \) for \( f \) such that \( f(\bar{\bar{x}}) = f(\omega_{\bar{x}}) \). 

\noindent Additionally, Corollary \ref{cor 1.6} implies the existence of a Palais-Smale sequence for \( f \) in the interval \( [c, f(\bar{x})] \). When \( f \) verifies the Palais-Smale condition in \( [c, f(\bar{x})] \), it ensures the convergence of the sequence \( (\tilde{y}_n)_n \) constructed in Barutello and Terracini's Algorithm.
\end{obs*}

\paragraph*{Improved version of Barutello and Terracini's Algorithm}
An improved version of Barutello and Terracini's Algorithm is proposed here. First, we introduce three auxiliary functions: \textsc{Gradient\_Descent} Function, \textsc{Bisection} Function and \textsc{Evolve} Function. In Algorithm \ref{alg: Bisection fucntion}, one can find their definitions considering P as a structure that contains the objective function $f$, its gradient $ \nabla f$, the steepest descent flow function $\eta_f$ as defined in Equation \eqref{steepest descent flow}, the list $L$ of known critical points and $x_{\text{min}}$ as defined in Algorithm \ref{Mountain Pass Algorithm}. Moreover, the parameter $\text{to\_min}$ indicates whether the stopping criteria for the \textsc{Gradient Descent} Function is met either when a minimum point is found or when the gradient $ \nabla f$ starts increasing.

 \begin{algorithm*}[h!]
    \caption{Bisection function}
    \label{alg: Bisection fucntion}
    \begin{algorithmic}[1]

    \Function{Gradient\_descent}{$x, P, \text{to\_min} = \text{true}$}
\State $x_{\text{new}} \gets x, \quad x_{\text{old}} \gets x, \quad dt \gets 1 \times 10^{-3}$
\State $\text{grad}_{\text{new}} \gets \nabla f(x_{\text{new}}), \quad \text{grad}_{\text{old}} \gets \nabla f(x_{\text{old}})$

\While{$(\text{to\_min} \land \| \text{grad}_{\text{new}} \| > \text{g\_tol}) \lor (\neg \text{to\_min} \land \| \text{grad}_{\text{new}} \| \leq \| \text{grad}_{\text{old}} \|)$}
    \State $x_{\text{old}} \gets x_{\text{new}}$
    
    \State $x_{\text{new}} \gets$ integrate $\eta_f$ starting from $x_{\text{old}}$ for one step
    \State $\text{grad}_{\text{old}} \gets  \text{grad}_{\text{new}}$
    \State $\text{grad}_{\text{new}} \gets \nabla f(x_{\text{new}})$
    \If{$ \neg \text{to\_min} \lor (x_{\text{new}} \text{ is known})$}
        \State \Return $x$
    \EndIf
\EndWhile

\State \Return $x_{\text{new}}$
\EndFunction
\\
 \Function{Bisection}{$x_0, x_1, P$}
    \State $\text{distance} \gets \|x_0 - x_1\|$
    \State $x \gets \frac{x_0 + x_1}{2}$  
    \State $x_{\text{new}} \gets$ \Call{Gradient\_descent}{$x, P, \text{to\_min} = \text{true}$}
    \If {$x_{\text{new}}$ is a unknown critical point}
    \State {Add $x_{\text{new}}$ to the list of critical points}
    \State \Return $x_0, x$
    \EndIf
    \If {$\|f(x_{\text{new}}) - f(P.x_{\text{min}})\| < \text{min\_step}$}
        \State \Return $x, x_1$
    \Else
        \State \Return $x_0, x$
    \EndIf
    \EndFunction

    \\

\Function{Evolve}{$x, P$}
 \State \Call{Gradient\_descent}{$x, P, \text{to\_min} = \text{false}$}
\EndFunction
    \end{algorithmic}
    \end{algorithm*}

\noindent We now explain the alternative algorithm to Barutello and Terracini's Algorithm. \textit{Mountain Pass Algorithm} described in Algorithm \ref{Mountain Pass Algorithm} takes initial points \( x_0 \) and \( x_1 \), a function \( f \) and its gradient \( \nabla f \). It returns a non-minimal critical point of the function  \( f \). In details:\\

\noindent \textbf{Input.} Set tolerance values \( \text{g\_tol} \) and \( \text{min\_step} \) to control the stopping criteria for the algorithm and define initial points \( x_0, x_1 \), function \( f \), and its gradient \( \nabla f \). The point \( x_0 \) must be a local minimum, while \( x_1 \) should not lie within the basin of attraction of the steepest descent flow associated with \( \eta_f \) defined in Equation~\ref{steepest descent flow}. Therefore, \( x_1 \) must either be another local minimum or satisfy the condition \( f(x_0) > f(x_1) \).  \\

\noindent \textbf{Step 1.} Initialise the minimum point and set up the problem \( P \) for processing. \\

\noindent \textbf{Step 2.}  Iteratively perform $(x_0, x_1) \gets$ \Call{Bisection}{$x_0, x_1, P$} to approach the stable manifold of the wanted non-minimal critical point. Stop when $\| x_0 - x_1 \| < \text{min\_step}$. 
\begin{obs}
\label{Improvement}
    It is important to highlight that the function \textsc{Bisection} stores any new critical point found during its process. As a matter of fact, when \textsc{Gradient\_Descent} is performed and $x_{\text{new}}$ is found, it checks whether $x_{\text{new}}$ was previously known or not. \\
\end{obs}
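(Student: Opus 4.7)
The plan is to establish the remark by tracing the pseudocode of Algorithm~\ref{alg: Bisection fucntion}, since the statement is really a property of the control flow rather than a mathematical identity. Two design features are at play, and I would address them in two steps.

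First, for the claim that \textsc{Bisection} stores any newly found critical point, I would inspect the body of \textsc{Bisection}. After computing $x_{\text{new}} \gets$ \textsc{Gradient\_Descent}$(x, P, \text{to\_min} = \text{true})$ at the midpoint $x = (x_0 + x_1)/2$, the conditional ``if $x_{\text{new}}$ is an unknown critical point'' explicitly appends $x_{\text{new}}$ to the list $L$ carried in $P$ before returning the pair $(x_0, x)$. Since \textsc{Gradient\_Descent} with $\text{to\_min} = \text{true}$ exits its main while loop only when $\|\nabla f(x_{\text{new}})\| \leq \text{g\_tol}$, the returned point is a numerical critical point, so $L$ collects only genuine approximate critical points and not spurious intermediate iterates.

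Second, for the claim that \textsc{Gradient\_Descent} checks whether $x_{\text{new}}$ was previously known, I would point to the branch ``if $\neg\text{to\_min} \lor (x_{\text{new}} \text{ is known})$ return $x$'' placed inside the while loop right after the flow update. This branch performs exactly the required membership test against $L$ at every integration step of $\eta_f$, so the check occurs continuously along the descent trajectory, not only at its endpoint. Together with the first step, both halves of the remark follow from a direct reading of the pseudocode.

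The only delicate point is specifying what ``$x_{\text{new}}$ is known'' means operationally: it must be understood as a proximity test against the elements of $L$ up to a prescribed tolerance, together with the implicit convention that $L$ is shared by reference between successive calls so that insertions persist across the recursion in \textsc{Bisection}. Once these two conventions are fixed, the remark follows with no analytic input beyond the exit criterion of \textsc{Gradient\_Descent}. I expect the hardest part to be pinning down these implementation conventions in a self-contained way, rather than any genuine mathematical argument.
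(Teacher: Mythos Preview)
Your reading of the pseudocode is accurate, and both claims in the remark are indeed visible in Algorithm~\ref{alg: Bisection fucntion} exactly where you point: the ``if $x_{\text{new}}$ is an unknown critical point'' branch in \textsc{Bisection} and the ``$x_{\text{new}}$ is known'' test inside the while loop of \textsc{Gradient\_Descent}. The paper itself offers no proof of this remark; it is stated as a self-evident observation about the algorithm's design, so your trace of the control flow is already more detailed than anything the paper provides.
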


\noindent \textbf{Step 3.} Evolve $x_0$ and $x_1$ along the stable manifold until the gradient $\nabla f$ reckoned one of the two points starts increasing. \\

Repeat \textbf{Step 2.} and \textbf{Step 3.} until $\| \nabla f\left(\frac{x_0 + x_1}{2}\right) \| < \text{g\_tol}$. \\

\noindent \textbf{Step 4.} Calculate the midpoint $x$ of \( x_0 \) and \( x_1 \) and return it as the non-minimal critical point. 

\begin{algorithm}[h!]
\caption{Mountain Pass Algorithm}
\label{Mountain Pass Algorithm}
    \begin{algorithmic}[1]
    \State \textbf{Input:} Set the tolerances \text{g\_tol} and min\_step and define the initial points $x_0, x_1$, the function $f$ and its gradient $\nabla f$.
    \State \textbf{Output:} Non-minimal point of the function $f$.
    \State \textbf{Step 1:} Initialise variable: $x_{\text{min}} \gets x_0$; \\ Initialise problem $P$: $P \gets \text{Problem}(f, \nabla f, x_{\text{min}}, L = \{x_0, x_1\})$.
    
        \State \textbf{Steps 2. and 3.} 
        \While{$\|\nabla f\left(\frac{x_0 + x_1}{2}\right)\| > \text{g\_tol}$ }
            \State $(x_0, x_1) \gets \Call{bisection}{x_0, x_1, P}$

            \If{$x_0$ or $x_1$ is known}
                \State \textbf{continue}
            \EndIf
    
            \If{$\|x_1 - x_0\| > \text{min\_step}$}
                \State \textbf{continue}
            \EndIf
            \State $x_0 \gets \Call{evolve}{x_0, P}$
            \State   $x_1 \gets \Call{evolve}{x_1, P}$
        \EndWhile
        
        \State \textbf{Step 4.} Compute and return midpoint: $x \gets \frac{x_0 + x_1}{2}$
        \State \Return $x$
\end{algorithmic}
\end{algorithm}

\noindent Not only does Algorithm \ref{Mountain Pass Algorithm} satisfy the same properties as Barutello and Terracini's Algorithm (allowing it to be used to prove Theorem \ref{Thm 1.4}), but, as pointed out in Remark \ref{Improvement}, it is also designed to identify as many critical points as possible.  \\

\noindent We can now state the Mountain Pass Theorem:
\begin{thm}[Mountain Pass Theorem, Theorem 2.1, \cite{BT04}]
\label{Mountain Pass Theorem}
Let $f$ be a $\mathcal{C}^2$ functional on a Hilbert space $X$. Let $x_1, x_2 \in X$ let $\Gamma_{x_1, x_2}$ be the set of paths defined in \eqref{path} and $c_0$ the level
\begin{equation}
\label{def mountain pass}
    c_0 \eqcolon \inf_{\gamma \in \Gamma_{x_1, x_2}} {\sup_{s \in [0, 1]} {f(\gamma(s))}},
\end{equation}
such that
\begin{equation}
\label{cod mountain pass}
    c_0 > \max{\{f(x_1), f(x_2)\}}.
\end{equation}
If the functional $f$ satisfies the Palais-Smale condition at level $c_0$, then there exists a critical point for the functional $f$ at level $c_0$, that is not a local minimiser.
\end{thm}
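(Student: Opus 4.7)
\medskip
\noindent\textbf{Plan for the proof of Theorem \ref{Mountain Pass Theorem}.} My strategy is to derive the statement directly from Theorem \ref{Thm 1.4} and Corollary \ref{cor 1.6}, combined with the hypothesis $(\mathrm{PS})_{c_0}$. The key observation is that the mountain pass condition \eqref{cod mountain pass} puts $x_1$ and $x_2$ into different connected components of every slightly super-critical sublevel, which is exactly the setting needed to trigger the machinery already developed in Section \ref{sec:Mountain pass solutions}.

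\smallskip
\noindent\textbf{Step 1: Separating $x_1$ and $x_2$ in sublevels.} First I would fix a strictly increasing sequence $c_n \uparrow c_0$ with $c_n > \max\{f(x_1),f(x_2)\}$. If for some $n$ the points $x_1$ and $x_2$ lay in the same connected component of $f^{c_n}$, they could be joined by a continuous path $\gamma \subset f^{c_n}$, which would give $\sup_{s\in[0,1]} f(\gamma(s)) \le c_n < c_0$, contradicting the definition \eqref{def mountain pass} of $c_0$. Hence $x_1$ belongs to some component $F_1^{c_n}$, $x_2$ to a distinct component, and the basin of attraction $\mathcal{F}_1^{c_n}$ is well defined.

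\smallskip
\noindent\textbf{Step 2: Extracting a Palais–Smale sequence at level $c_0$.} Next, for each $n$ I would pick a quasi-minimising path $\gamma_n \in \Gamma_{x_1,x_2}$ with $\sup_{s\in[0,1]} f(\gamma_n(s)) < c_0 + 1/n$. Theorem \ref{Thm 1.4} yields a point $\bar{x}_n \in \gamma_n([0,1]) \cap \partial \mathcal{F}_1^{c_n}$ with $f(\bar{x}_n) < c_0 + 1/n$. Corollary \ref{cor 1.6} applied to each $\bar{x}_n$ produces sequences $(\tilde{y}_k^{(n)})_k$ with $c_n \le f(\tilde{y}_k^{(n)}) \le f(\bar{x}_n)$ and $\nabla f(\tilde{y}_k^{(n)}) \to 0$ as $k \to +\infty$. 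A standard diagonal extraction then gives points $z_n \coloneq \tilde{y}_{k(n)}^{(n)}$ satisfying
\begin{equation*}
    c_n \le f(z_n) \le c_0 + \tfrac{1}{n}, \qquad \|\nabla f(z_n)\| < \tfrac{1}{n},
\end{equation*}
i.e.\ a Palais–Smale sequence for $f$ at level $c_0$. Invoking $(\mathrm{PS})_{c_0}$ gives a subsequence $z_{n_k} \to x^*$, and continuity of $f$ and $\nabla f$ forces $f(x^*) = c_0$ and $\nabla f(x^*) = 0$, so $x^*$ is a critical point at the desired level.

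\smallskip
\noindent\textbf{Step 3: Non-minimality and the main obstacle.} The remaining task — and the genuinely delicate one — is to show $x^*$ is not a local minimiser. The natural approach is a deformation argument: if every critical point at level $c_0$ were a strict local minimiser, then by $(\mathrm{PS})_{c_0}$ the critical set $K_{c_0}$ would be compact, so one could find a neighbourhood $U \supset K_{c_0}$ on whose boundary $f \ge c_0 + \delta$, combined with the fact that outside $U$ the flow $\eta_f$ strictly decreases $f$ along a uniform time interval on the strip $\{c_0 - \delta \le f \le c_0 + \delta\} \setminus U$. Flowing any quasi-minimising path $\gamma_n$ by $\eta_f$ for a suitable time and surgically modifying it to avoid $U$ would then yield a path in $\Gamma_{x_1, x_2}$ with $\sup_s f < c_0$, contradicting \eqref{def mountain pass}. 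The hard part will be carrying out this deformation within the framework used in the paper — the gradient-like flow $\eta_f$ of \eqref{steepest descent flow} is only semi-Lipschitz along trajectories approaching critical points, so the quantitative decrease estimate on $\{c_0-\delta \le f \le c_0+\delta\}\setminus U$ must be derived from the $(\mathrm{PS})_{c_0}$ condition by a contradiction/compactness argument rather than from a uniform lower bound on $\|\nabla f\|$. Once this deformation estimate is in place, the theorem follows.
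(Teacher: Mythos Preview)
The paper does not supply its own proof of Theorem \ref{Mountain Pass Theorem}; the result is quoted from \cite{BT04}, and the surrounding material (Theorem \ref{Thm 1.4}, Corollaries \ref{cor 1.5}--\ref{cor 1.6}, and the bisection algorithm) is precisely the constructive machinery from \cite{BT04} on which the proof there rests. Your Steps 1--2 are fully in line with that framework: you correctly use the disconnectedness of every sublevel $f^{c_n}$ with $c_n<c_0$ to invoke Theorem \ref{Thm 1.4}, and then feed the boundary points $\bar{x}_n$ into Corollary \ref{cor 1.6} and diagonalise to manufacture a Palais--Smale sequence at level $c_0$. So far there is nothing to compare against and nothing to object to.

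The one genuine gap is in Step 3. Your contradiction hypothesis is that every critical point at level $c_0$ is a \emph{strict} local minimiser, but the theorem asserts the existence of a critical point that is not a local minimiser at all. A non-strict local minimiser $p$ with $f(p)=c_0$ need not admit a neighbourhood $U$ with $f \ge c_0+\delta$ on $\partial U$, so the surgery you describe (flowing $\gamma_n$ by $\eta_f$ and rerouting around $U$) does not go through as written in that case. The standard remedy, and the one implicit in the paper's citation of \cite{Pucci}, is the Pucci--Serrin analysis of the mountain-pass critical set: under $(\mathrm{PS})_{c_0}$ one shows that if $K_{c_0}$ consisted only of local minimisers it would have a saddle-type structure in its complement that still permits the deformation, yielding the contradiction. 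You should either invoke \cite{Pucci} directly for this step or strengthen your argument to handle non-strict local minimisers explicitly.
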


\subsection{Mountain pass solutions for the symmetrical $n$-body problem}
\label{sec: Mountain pass solutions for the symmetrical $n$-body problem}
Our aim now is to apply Theorem \ref{Mountain Pass Theorem} to the action functional associated with the $n$-body problem, showing that this theorem implies the existence of a solution distinct from those found in Section \ref{sec:Finding the critical points}. To apply Theorem \ref{Mountain Pass Theorem}, it is necessary to regularise the action functional defined in \eqref{action functional on fundamental domain}.

\noindent

\noindent Among the many approaches that can be used to achieve this regularisation (see, for instance \cite{mountain_pass_strong_force}), we chose the following one, since it more effectively regularises collisional orbits by containing them, rather than pushing them away.
\\
    \begin{equation}
    \label{eq: Regularised action functional}
    \begin{aligned}
        \mathcal{A}_{\I}^{\text{reg.}}(y) &\coloneq  \int_{\I} L_{\varepsilon}(y(t), \dot{y}(t)) \, \mathrm{d}t, \\
        &= \int_{\I} \left(K(\dot{y}(t)) + U_{\varepsilon}(y(t))\right) \, \mathrm{d}t, \\
        &= \int_{\I} \left( \dfrac{1}{2} \sum_{i=1}^n m_i \|\dot{y}(t)\|^2 + \sum_{i<j} \dfrac{m_i m_j}{\sqrt{\varepsilon^2 + \|y_i(t) - y_j(t)\|^2}} \right) \mathrm{d}t.
    \end{aligned}
    \end{equation}
    
    The regularised action functional $\mathcal{A}_{\I}^{\text{reg.}}$ is smooth, specifically $\mathcal{C}^2(Y)$. \\

 We now state the following proposition: \\
\begin{prop}
\label{prop: PS reg action}
    The regularised action functional defined in \eqref{eq: Regularised action functional} satisfies the Palais-Smale condition at every level $c > 0$.
\end{prop}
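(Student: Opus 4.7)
The plan is to follow the classical three-step scheme for verifying the Palais–Smale condition at a given level for a smooth functional on a Hilbert space: establish $H^1$-boundedness of the sequence, pass to a weak limit via reflexivity and Rellich–Kondrachov, and upgrade to strong convergence by exploiting the compact embedding $H^1 \hookrightarrow C^0(\I)$ together with the smoothness of the regularised potential.

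Let $(y_n)_n \subset Y$ be a Palais–Smale sequence at level $c > 0$, so that $\mathcal{A}_\I^{\mathrm{reg.}}(y_n) \to c$ and $\nabla \mathcal{A}_\I^{\mathrm{reg.}}(y_n) \to 0$ in $Y^*$. For the boundedness step, the potential $U_\varepsilon$ is non-negative and uniformly bounded on $\rchi$ by $C_\varepsilon \coloneq \sum_{i<j} m_i m_j / \varepsilon$, so for $n$ large
\begin{equation*}
    \tfrac{1}{2} \sum_i m_i \|\dot y_{i,n}\|_{L^2(\I)}^2 = \mathcal{A}_\I^{\mathrm{reg.}}(y_n) - \int_\I U_\varepsilon(y_n)\,\mathrm{d}t \leq c + 1,
\end{equation*}
which bounds $\|\dot y_n\|_{L^2}$. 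An $L^2$-bound on $y_n$ itself follows from a Poincaré-type inequality on $Y$: the fixed-end conditions imposed by the isotropy subgroups $H_0, H_1$ (or the equivariance $g\,y(t_0) = y(t_1)$ in the cyclic case), combined with the centre-of-mass constraint built into $\rchi$, ensure that $\|\dot y\|_{L^2}$ controls the full $H^1$-norm on $Y$ up to a constant—this is the same coercivity that underpins the variational framework of \cite{FerrarioTerracini05}. Hence $(y_n)_n$ is uniformly bounded in $H^1$.

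By reflexivity and the Rellich–Kondrachov theorem, a subsequence (still denoted $(y_n)_n$) satisfies $y_n \rightharpoonup y_0$ weakly in $H^1$ and $y_n \to y_0$ uniformly on $\I$. The decisive simplification afforded by the regularisation is that, for fixed $\varepsilon > 0$, the map $y \mapsto \nabla U_\varepsilon(y)$ is globally Lipschitz on $\rchi^{\mathrm{ker}\,\tau}$, with constant $L = L(\varepsilon, m_1, \ldots, m_n)$. From the identity
\begin{equation*}
    \langle \nabla \mathcal{A}_\I^{\mathrm{reg.}}(y_n) - \nabla \mathcal{A}_\I^{\mathrm{reg.}}(y_m),\, y_n - y_m \rangle = \sum_i m_i \|\dot y_{i,n} - \dot y_{i,m}\|_{L^2}^2 + \int_\I \langle \nabla U_\varepsilon(y_n) - \nabla U_\varepsilon(y_m),\, y_n - y_m \rangle\,\mathrm{d}t,
\end{equation*}
the left-hand side is bounded by $o(1)\|y_n - y_m\|_{H^1}$ (since the gradients go to zero in $Y^*$ and the differences are bounded in $H^1$), while the potential contribution is dominated by $L\|y_n - y_m\|_{L^2}^2$ and therefore vanishes by uniform convergence. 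It follows that $\|\dot y_n - \dot y_m\|_{L^2} \to 0$, so $(y_n)_n$ is Cauchy in $H^1$ and converges strongly to $y_0$.

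The main technical obstacle is the $H^1$-boundedness step: while the bound on $\|\dot y_n\|_{L^2}$ comes for free from the non-negativity of $U_\varepsilon$, upgrading it to a bound on $\|y_n\|_{L^2}$ genuinely relies on the Poincaré-type inequality on $Y$. Without sufficient symmetry, a constant configuration with pairwise separations tending to infinity would produce a non-convergent Palais–Smale sequence accumulating at level $0$ (vanishing potential and vanishing gradient, with zero kinetic energy), which is precisely why the statement restricts to $c > 0$ and leverages the equivariant structure inherent to $Y$. Once the coercivity is in place, the global smoothness of $U_\varepsilon$ makes the remainder of the argument routine.
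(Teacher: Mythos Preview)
Your proof is correct and follows the same overall skeleton as the paper's: boundedness via coercivity, weak limit via reflexivity, then upgrade to strong convergence. The paper is terser on all three counts: it simply asserts that coercivity carries over from $\mathcal{A}_\I$ to $\mathcal{A}_\I^{\text{reg.}}$ since the choice of potential plays no role there, and for the strong-convergence step it writes the gradient abstractly as $\nabla \mathcal{A}_\I^{\text{reg.}} = \text{Id} + K$ with $K$ compact, then invokes the standard fact that a bounded sequence with $(\text{Id}+K)(y_\nu) \to 0$ has a strongly convergent subsequence. Your direct Cauchy estimate via the global Lipschitz bound on $\nabla U_\varepsilon$ is precisely what unpacks that compactness claim, and your explicit remarks on the Poincar\'e-type inequality on $Y$ (and on why the restriction $c>0$ is needed to rule out escaping sequences) fill in details the paper leaves implicit. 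The two arguments are equivalent in substance; yours is more self-contained, the paper's more economical.
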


\begin{proof}
    Let $(y_\nu)_\nu \subset Y$ be such that 
    \begin{equation*}
         \mathcal{A}_{\I}^{\text{reg.}}(y_\nu) \xrightarrow{\nu \to + \infty} c \quad \text{and} \quad \nabla  \mathcal{A}_{\I}^{\text{reg.}} (y_\nu) \xrightarrow{\nu \to + \infty} 0.
    \end{equation*}
    Our aim is to find an element $\bar{y} \in Y$ such that $y_\nu \longrightarrow \bar{y}$ as $\nu \longrightarrow + \infty$ in $Y$. \\
    As coercivity does not depend on the chosen potential $U$ and $\mathcal{A}_{\I}$ is coercive, we have that also $\mathcal{A}_{\I}^{\text{reg.}}$ is coercive. It follows that $(y_\nu)_\nu$ is bounded in $Y$, therefore, up to subsequences, it weakly converges to $\bar{y}$. Moreover, it also follows that we can write $\nabla \mathcal{A}_{\I}^{\text{reg.}}$ as $(\text{Id} + K)$, with $K$ a compact operator; hence:
    \begin{equation}
    \label{Mountain Pass property in the proof}
    \begin{aligned}
         \nabla  \mathcal{A}_{\I}^{\text{reg.}} (y_\nu) &\xrightarrow{\nu \to + \infty} 0 \\
         (\text{Id} + K) (y_\nu) &\xrightarrow{\nu \to + \infty} 0 \\
         y_\nu + K(y_\nu) &\xrightarrow{\nu \to + \infty} 0. 
    \end{aligned}
    \end{equation}
    As we have that $(y_\nu)_\nu \rightharpoonup \bar{y}$ and $K(y_\nu)_\nu$ is precompact, using relation \eqref{Mountain Pass property in the proof}, we have that there exists a subsequence of $(y_\nu)_\nu$, which we still call $(y_\nu)_\nu$, so that $K(y_{\nu})_{\nu} \longrightarrow K(\bar{y})$. Using again relation \eqref{Mountain Pass property in the proof}, we conclude that $y_\nu \longrightarrow \bar{y}$ as $\nu \longrightarrow + \infty$ in $Y$.
\end{proof}

\paragraph{Isolated and non-isolated minimum points}
To apply the Mountain Pass Theorem to the symmetrical $n$-body problem, we must distinguish whether the minimum points found in Section \ref{sec:Finding the critical points} are isolated or not. For this purpose, we introduce the following definition:

\begin{defn}[Type S]
\label{def: type S}
A symmetry group $G$ is said to be of \textit{type S} if for every $x \in Y^G$ there exists a one parameter group of rotation $R_\theta$ such that $R_\theta x \in Y^G$.
\end{defn}

\begin{obs*}
Being of type S implies that if $x$ is a minimum point of the action functional defined in \eqref{action functional on fundamental domain} on $Y^G$, then $R_\theta x$ is also a minimum point. Consequently, this property always implies a geometrically trivial multiplicity of solutions.
\end{obs*}

\noindent This result indicates that, if it holds for a group of rotations parameterised by $R_\theta$ with $\theta \in [a, b]$, then no minimum point can be isolated.

\noindent In our setting, the condition of being \textit{type S} can be formulated as follows: 
\begin{itemize}
    \item[a.] for all elements $g = (\rho, \tau, \sigma) \in \mathrm{ker}\,\tau$, we have $R_\theta^{-1} \rho R_\theta \in \mathrm{ker}\,\tau$;
    \item[b.] if $\tau_g \neq \{\mathds{1}\}$, then $R_\theta^{-1} \rho_g R_\theta = \rho_g$;
\end{itemize}
or equivalently  
\begin{itemize}
    \item[c.] the normaliser $N_{SO(d)}(G)$ of $G$ in $SO(d)$ is a one parameter subgroup of $SO(d)$.
\end{itemize}

\noindent In particular, if we define $V^G = \{x \in \rchi : \rho\, x = x\}$, then $V^G$ must be $R_\theta$-invariant. 

\noindent Accordingly, we can consider the following cases: \\
\begin{itemize}
    \item \textbf{Not of type S:} if the symmetry group $G$ is not of type S, then the minimum points are likely isolated, allowing us to directly apply Algorithm \ref{Mountain Pass Algorithm} to the regularised action functional $\mathcal{A}_{\I}^{\text{reg.}}$ defined in \eqref{eq: Regularised action functional}. \\

    \item \textbf{Type S:} if the symmetry group $G$ is of type S, then the minimum points are  not isolated, and a more intricate analysis would be required to modify Algorithm \ref{Mountain Pass Algorithm}. For example, this would involve distinguishing the minimum points based on their disjoint connected components. However, such an analysis goes beyond the scope and aims of this paper. For the specific case of the examples in Section \ref{section Examples}, one may consider the value of the regularised action functional. \\
\end{itemize}

\noindent In the former case, by Proposition \ref{prop: PS reg action}, we deduce that the regularised action functional $\mathcal{A}_{\I}^{\text{reg.}}$ satisfies the Palais-Smale condition at every level $c > 0$. Therefore, we can apply Theorem \ref{Mountain Pass Theorem}. Using Algorithm \ref{Mountain Pass Algorithm}, we can numerically find a solution that differs from those found in Section \ref{sec:Finding the critical points}.

\section{Examples}
\label{section Examples}

\subsection{Symmetry group $G= \mathbb{Z}_2$} Now, we apply the theoretical framework developed in this article to a concrete example. Considering the three-body problem with equal unitary masses, and the symmetry group \( G = \mathbb{Z}_2 \), we present periodic orbits in \( \mathbb{R}^2 \) identified using Algorithm \ref{alg:MP-AIDEA_Lattice}. These orbits are the circular orbit in Figure \ref{fig: cerchio} of action level $9.802$, the Ducati orbit in Figure \ref{fig: manubrio} of action level $10.442$, the orbit in Figure \ref{fig: es_1 1} of action level $13.579$ and the orbit in Figure \ref{fig: es_1 2} of action level $13.865$.

\noindent For this case, we utilised the code from \cite{symorb} to model the problem, including the cost function used in the optimisation process. Following the setup described in Section \ref{Numerical optimisation routine}, we selected $24$ Fourier coefficients and assumed equal masses for all bodies.

\noindent Regarding the genetic algorithm, we set the number of populations to \( N_\text{P} = 4 \) and the number of agents per population to \( N_\text{A} = 8 \). Each agent was initialized with 24 randomly assigned Fourier coefficients. MP-AIDEA was used to construct the archive \( \textgoth{A} \), which stores all identified local minimum points along with the population individuals at each restart.

\begin{figure}[H]
\centering
\begin{minipage}{0.45\textwidth}
    \centering
    \includegraphics[scale=0.33]{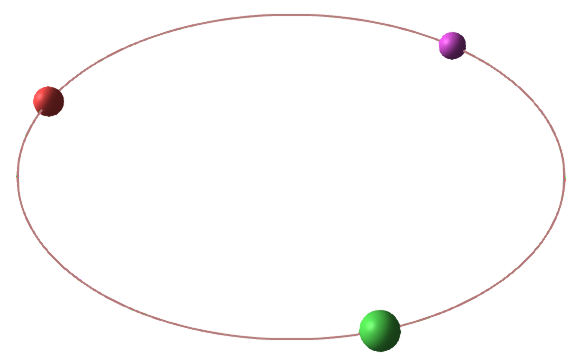}
    \caption{Circular orbit of action level $9.802$}
    \label{fig: cerchio}
\end{minipage}
\hfill
\begin{minipage}{0.45\textwidth}
    \centering
    \includegraphics[scale=0.36]{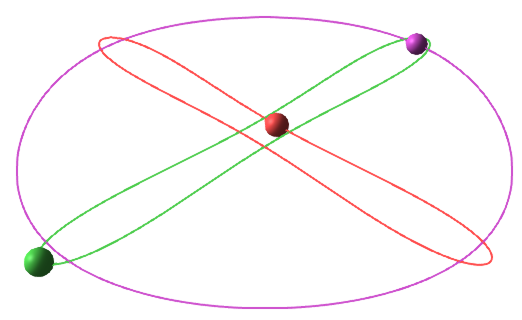}
    \caption{Ducati orbit of action level of $10.442$}
    \label{fig: manubrio}
\end{minipage}
\end{figure}

\begin{figure}[H]
\centering
\begin{minipage}{0.45\textwidth}
    \centering
    \includegraphics[width=0.95\textwidth]{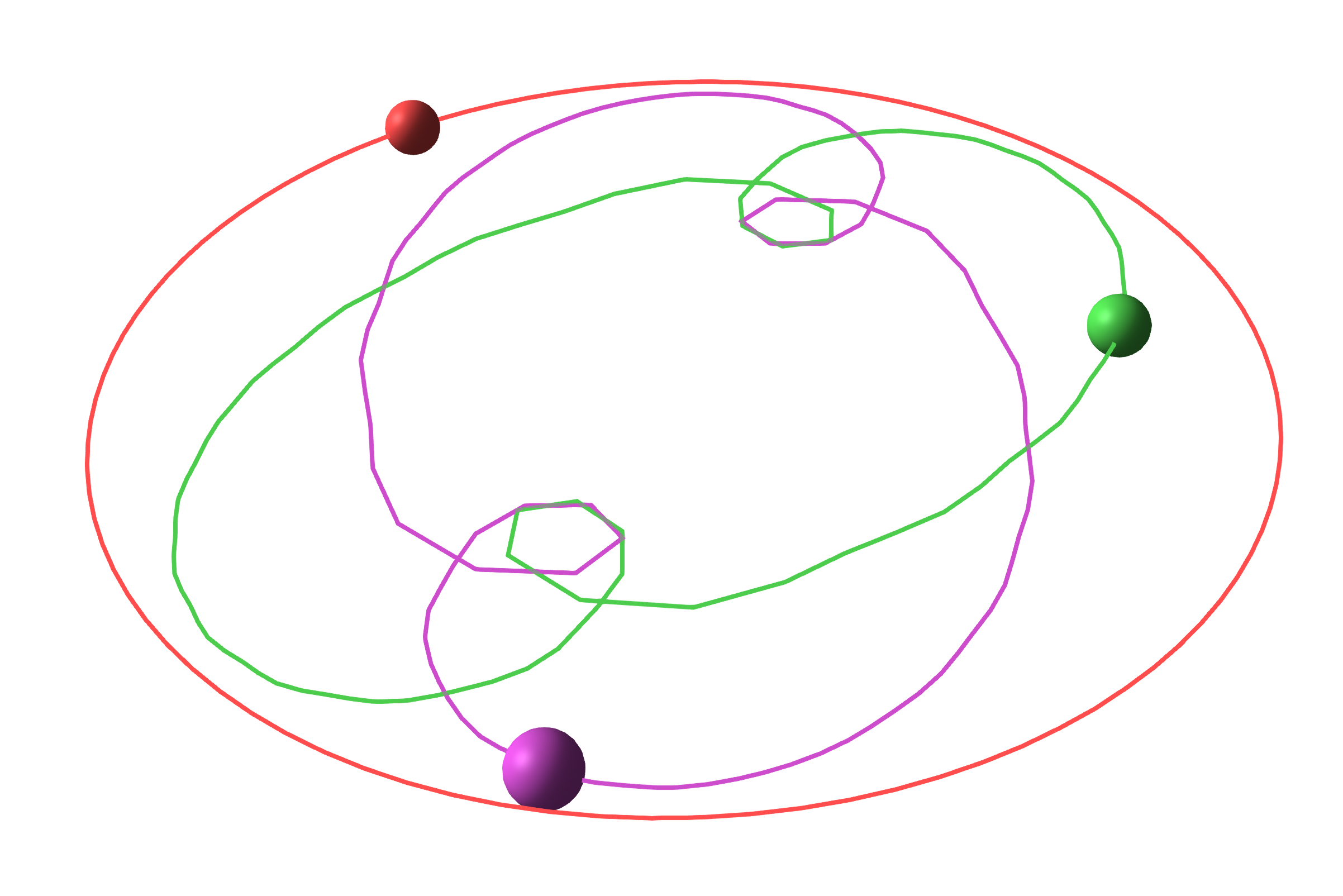}
    \caption{Orbit of action level $13.579$}
    \label{fig: es_1 1}
\end{minipage}
\hfill
\begin{minipage}{0.45\textwidth}
    \centering
    \includegraphics[width=0.95\textwidth]{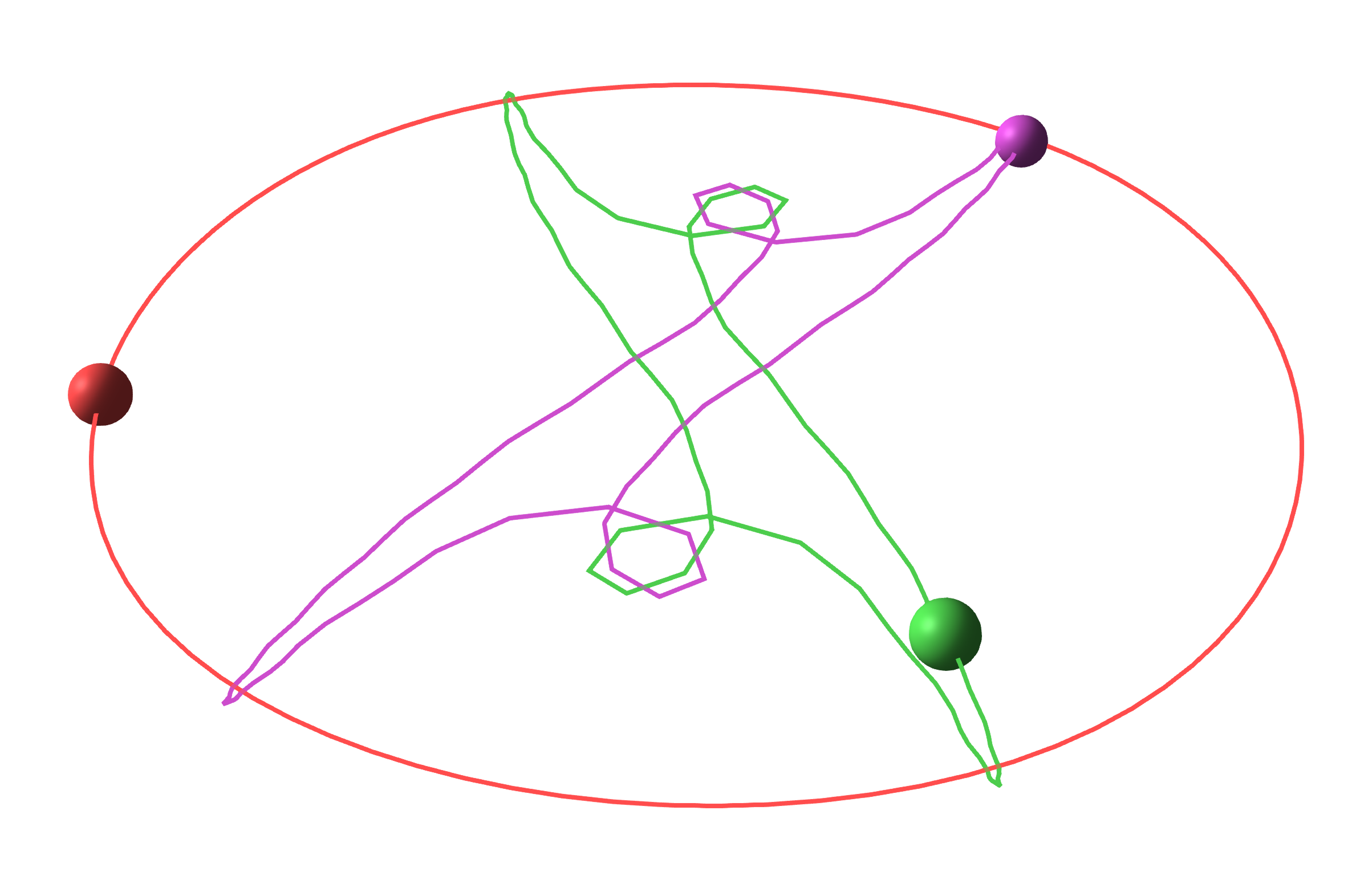}
    \caption{Orbit of action level of $13.865$}
    \label{fig: es_1 2}
\end{minipage}
\end{figure}

\noindent Since this is a relatively simple case, the number of generations in Algorithm \ref{alg:MP-AIDEA_Lattice} did not need to be large. We set \( \text{NoG} = 8 \), which was sufficient to identify four distinct orbits which were the only four solutions found within this setting that exhibited a small gradient norm of the action.

\begin{figure}[H]
\centering
\includegraphics[scale=0.2]{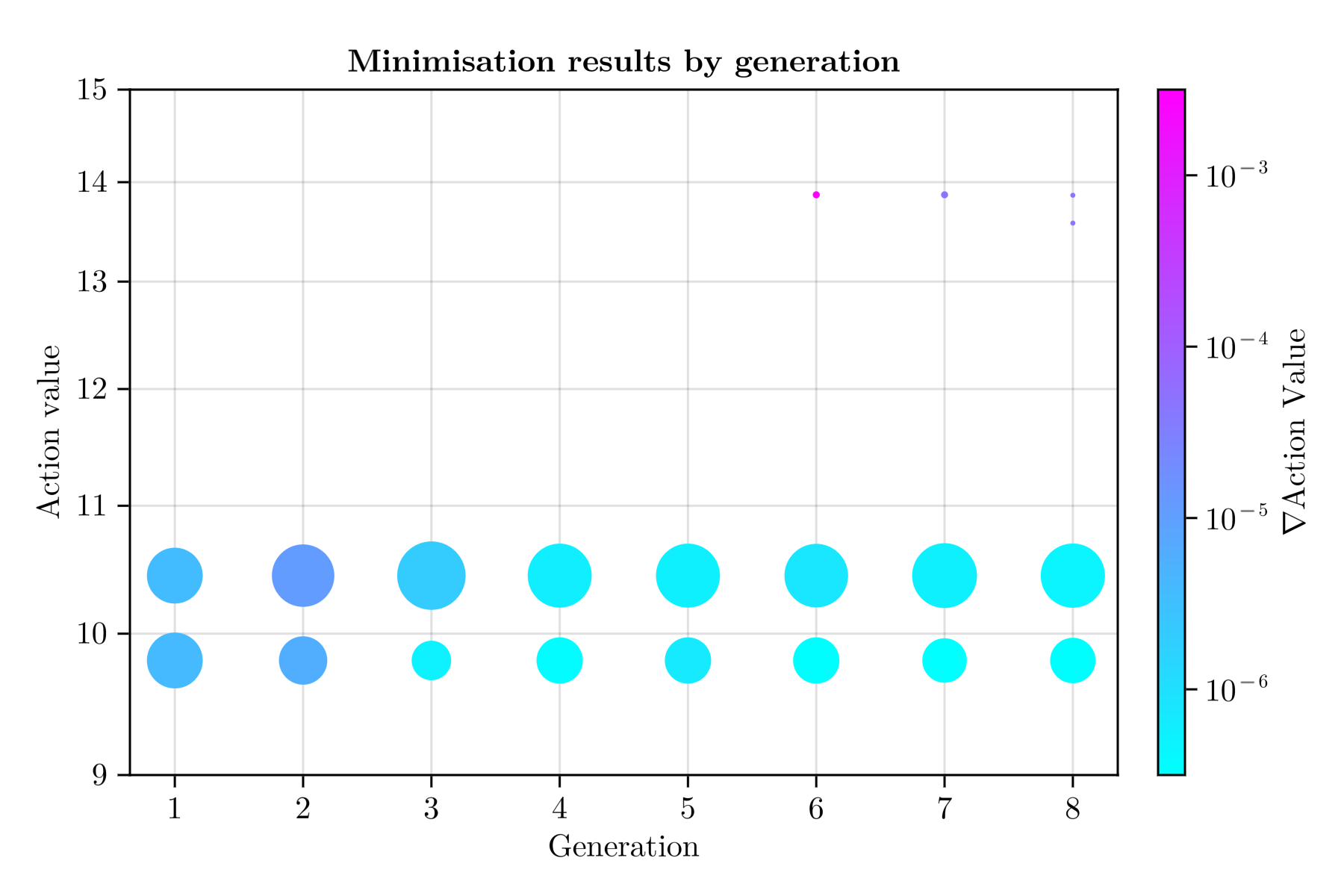}
\caption{Minimisation results by generation. The plot represents the action values of identified solutions across generations. Larger dots indicate a higher number of solutions found at that action value, while the colour gradient represents the magnitude of the action gradient (\(\nabla\) Action Value), with blue indicating smaller gradients and magenta representing higher gradients.}
\label{fig: MPAIDEA_results_1}
\end{figure}

\noindent As shown in Figure \ref{fig: MPAIDEA_results_1}, two primary minimum action values, $9.802$ and $10.442$, were consistently observed across generations. In contrast, the two significant action value points, $13.579$ and $13.865$, only appeared starting from the sixth generation. This result highlights the efficiency of Algorithm \ref{alg:MP-AIDEA_Lattice} in identifying minimum points. As previously mentioned, the  primary objective of Algorithm \ref{alg:MP-AIDEA_Lattice} is to systematically explore the domain to identify as many local minimum points as possible. By leveraging the Lattice algorithm in combination with MP-AIDEA, the search process becomes more structured and adaptive, allowing for a more comprehensive exploration of the solution space.

\paragraph{Mountain Pass solution}
Using the orbits found in Figures \ref{fig: cerchio} and \ref{fig: manubrio} as initial points, the Mountain Pass Algorithm \ref{Mountain Pass Algorithm} gives the following result of action value $11.706$. 
\begin{figure}[H]
\centering
\includegraphics[scale=0.3]{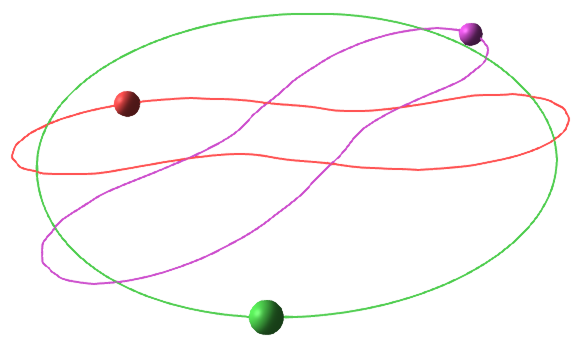}
\caption{Mountain Pass solution}
\label{fig: robaux}
\end{figure}

\paragraph{Morse index}
Figures \ref{fig: cerchio}, \ref{fig: manubrio}, \ref{fig: es_1 1} and \ref{fig: es_1 2} have a discrete Morse index of \( 0 \) on \( \mathbb{I} \), meaning they can be considered local minimum points. On the other hand, Figure \ref{fig: robaux} has a discrete Morse index \( > 0 \) on \( \mathbb{I} \), indicating that it is a non-minimal solution.

\paragraph{Intra and trans level distances}
The values of \( \rho_{\mathrm{IL}} \) and \( \rho_{\mathrm{TL}_k} \) of Figures \ref{fig: cerchio}, \ref{fig: manubrio}, \ref{fig: es_1 1} and \ref{fig: es_1 2} are represented in Figure \ref{fig: intra/trans distances} in blue, yellow, green and pink points respectively. The different points represent the several orbits found using Algorithm \ref{alg:MP-AIDEA_Lattice} with $\text{NoG} = 8$.
\begin{figure}[H]
\centering
\includegraphics[scale=0.17]{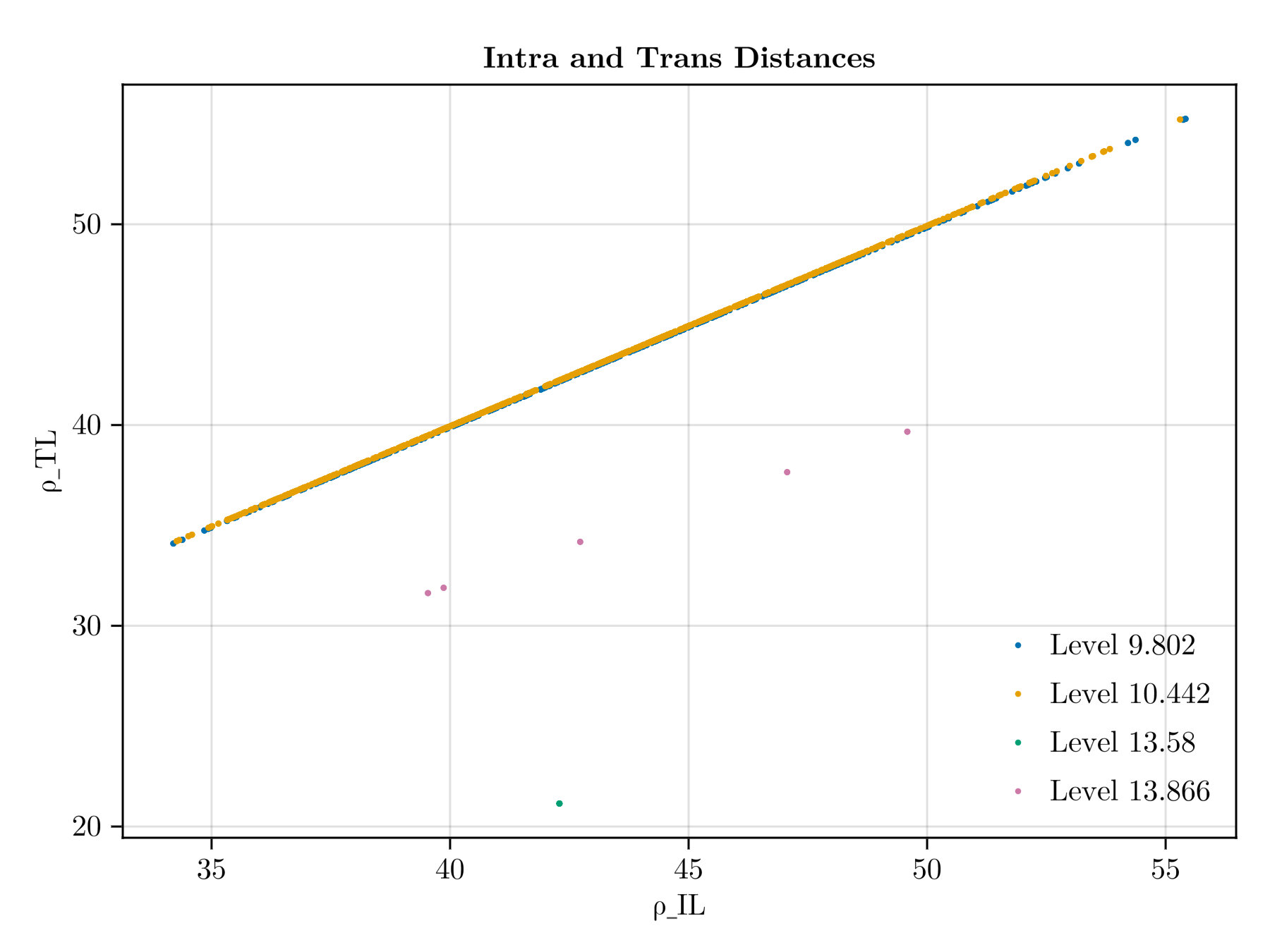}
\caption{The values of \( \rho_{\mathrm{IL}} \) and \( \rho_{\mathrm{TL}_k} \) of Figures \ref{fig: cerchio}, \ref{fig: manubrio}, \ref{fig: es_1 1} and \ref{fig: es_1 2}. Each point represents a local minimum point, with colours distinguishing the two levels.}
\label{fig: intra/trans distances}
\end{figure}

\subsection{Symmetry group $G= D_6$}
Using a similar setting of the previous case, we present another example of a Mountain Pass solution considering a different symmetry group, that is \( G = D_6 \). As initial points for Algorithm \ref{Mountain Pass Algorithm}, we consider two periodic orbits in \( \mathbb{R}^2 \) identified using Algorithm \ref{alg:MP-AIDEA_Lattice} which are the 8-shape orbit of action level of $5.858$ in Figure \ref{fig: 8-shape orbit of action level of $5.858$} and the 8-shape orbit of action level of $9.3$ in Figure \ref{fig: 8-shape orbit of action level of $9.3$}.

\begin{figure}[H]
\centering
\begin{minipage}{0.45\textwidth}
    \centering
    \includegraphics[width=\textwidth]{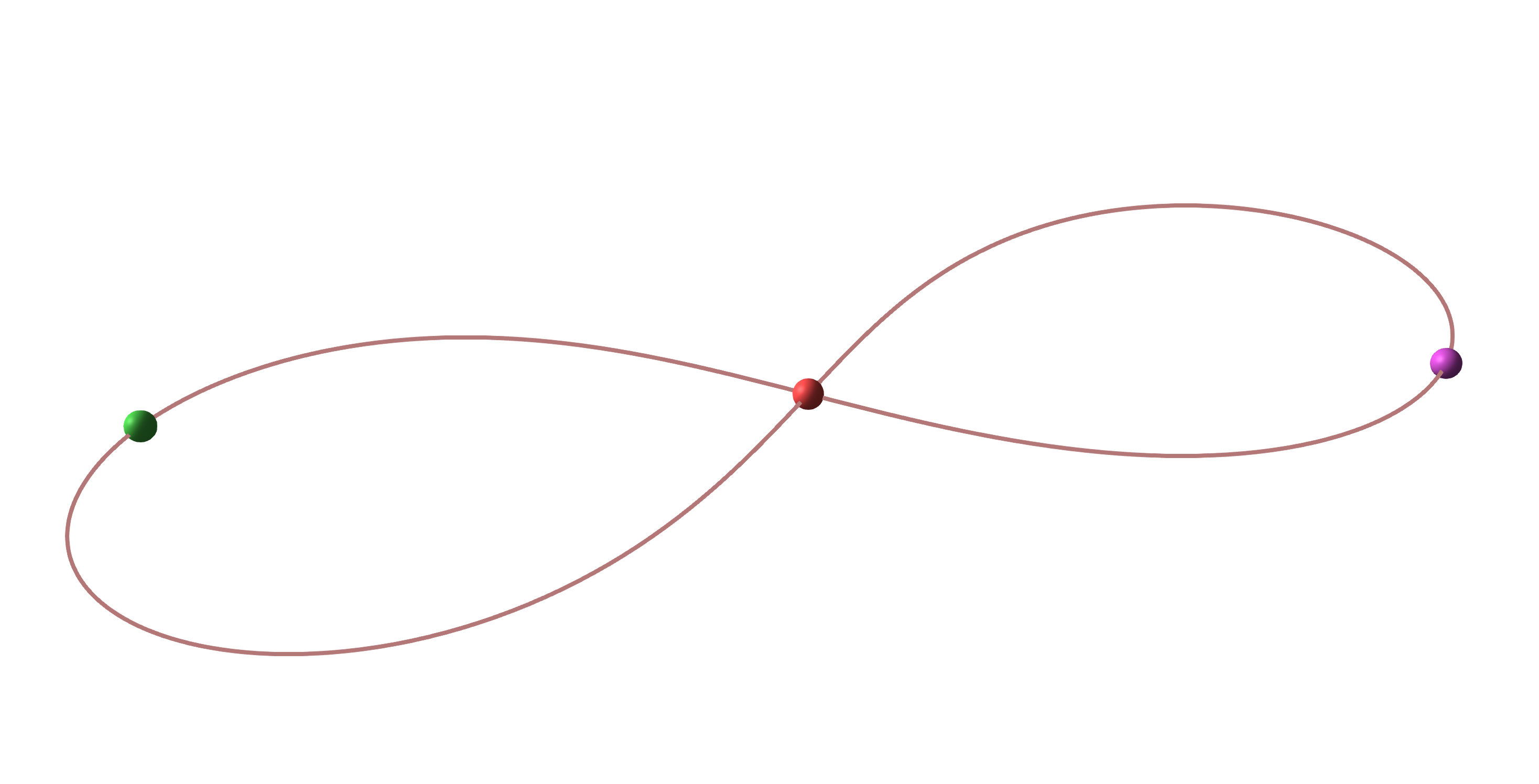}
    \caption{8-shape orbit of action level of $5.858$}
    \label{fig: 8-shape orbit of action level of $5.858$}
\end{minipage}
\hfill
\begin{minipage}{0.45\textwidth}
    \centering
    \includegraphics[width=0.5\textwidth]{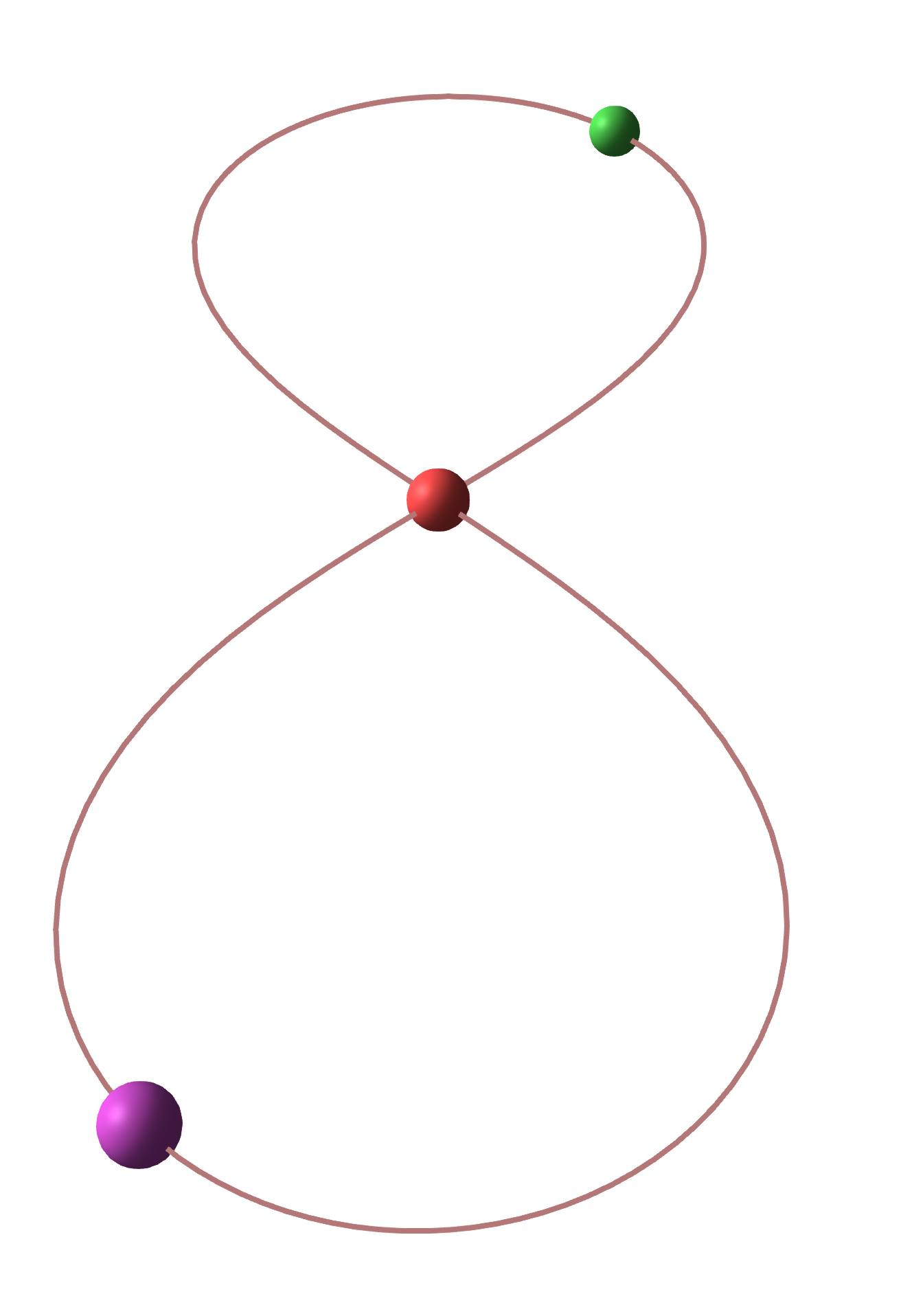}
    \caption{8-shape orbit of action level of $9.298$}
    \label{fig: 8-shape orbit of action level of $9.3$}
\end{minipage}
\end{figure}

\noindent Similar to the previous example, the number of generations in Algorithm \ref{alg:MP-AIDEA_Lattice} did not need to be large. We set \( \text{NoG} = 10 \), which was sufficient to identify two distinct orbits which were the two solutions found within this setting that exhibited a small gradient norm of the action.

\begin{figure}[H]
\centering
\includegraphics[scale=0.2]{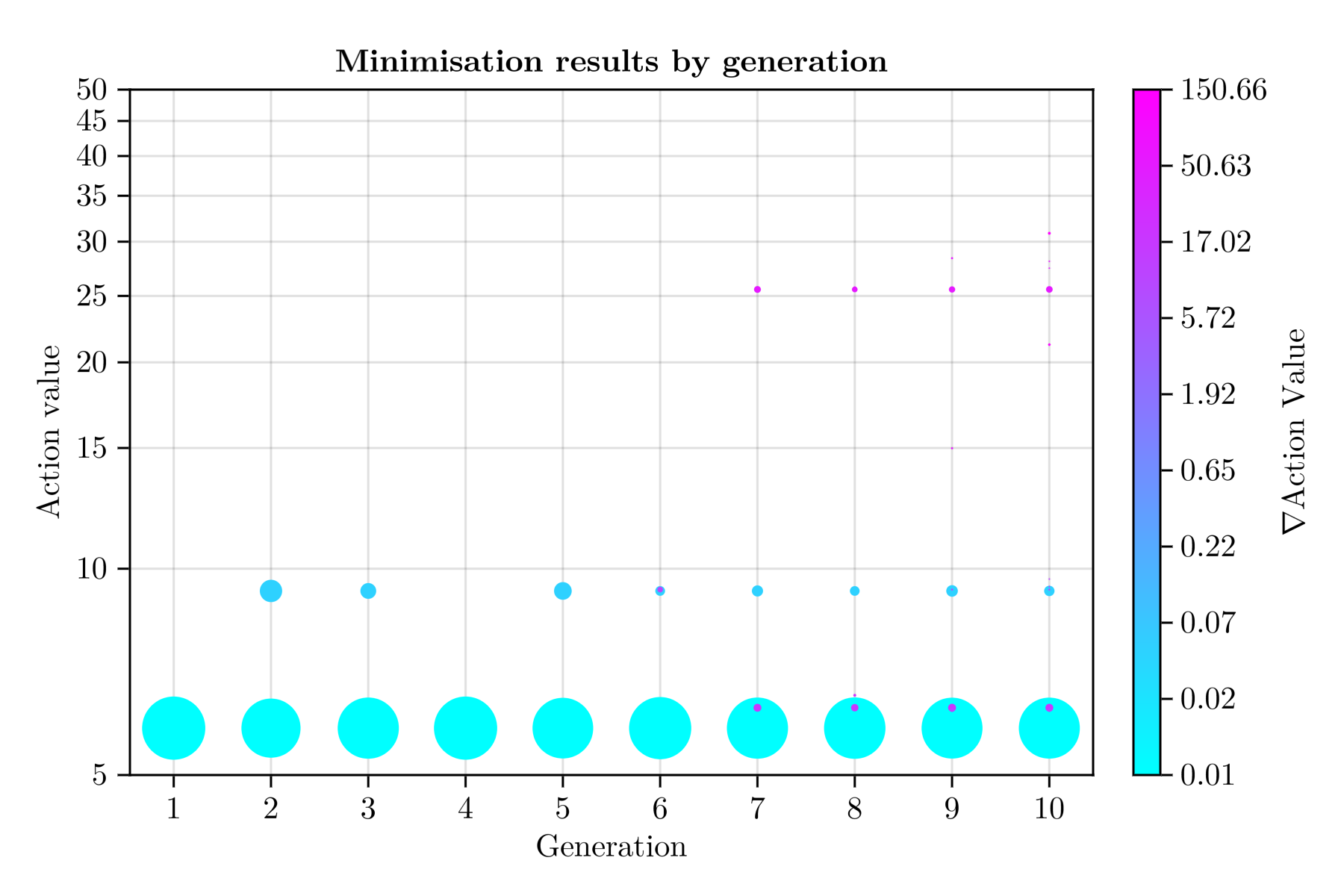}
\caption{Minimization results by generation. The plot represents the action values of identified solutions across generations in logarithmic scale. Larger dots indicate a higher number of solutions found at that action value, while the colour gradient represents the magnitude of the action gradient (\(\nabla\) Action Value), with blue indicating smaller gradients and magenta representing higher gradients.}
\label{fig: MPAIDEA_results}
\end{figure}

\noindent As illustrated in Figure \ref{fig: MPAIDEA_results}, a primary minimum action value of $5.858$ was consistently observed throughout the generations. In contrast, the second significant action value, $9.3$, only emerged from the second generation onward. This outcome aligns with the results of the previous case. Additionally, it is worth noting that from the eighth generation onward, solutions with a high gradient norm also appeared. This is because Algorithm \ref{alg:MP-AIDEA_Lattice} failed to converge in that specific region of the domain. This behaviour is expected, as we are decomposing and minimising on smaller subdomains, some of which may not contain minimisers.

\noindent Using the orbits found in Figures \ref{fig: 8-shape orbit of action level of $5.858$} and \ref{fig: 8-shape orbit of action level of $9.3$} as initial points, the Mountain Pass Algorithm \ref{Mountain Pass Algorithm} gives the following result of action value 9.60:
\begin{figure}[H]
\centering
\includegraphics[width=0.5\textwidth]{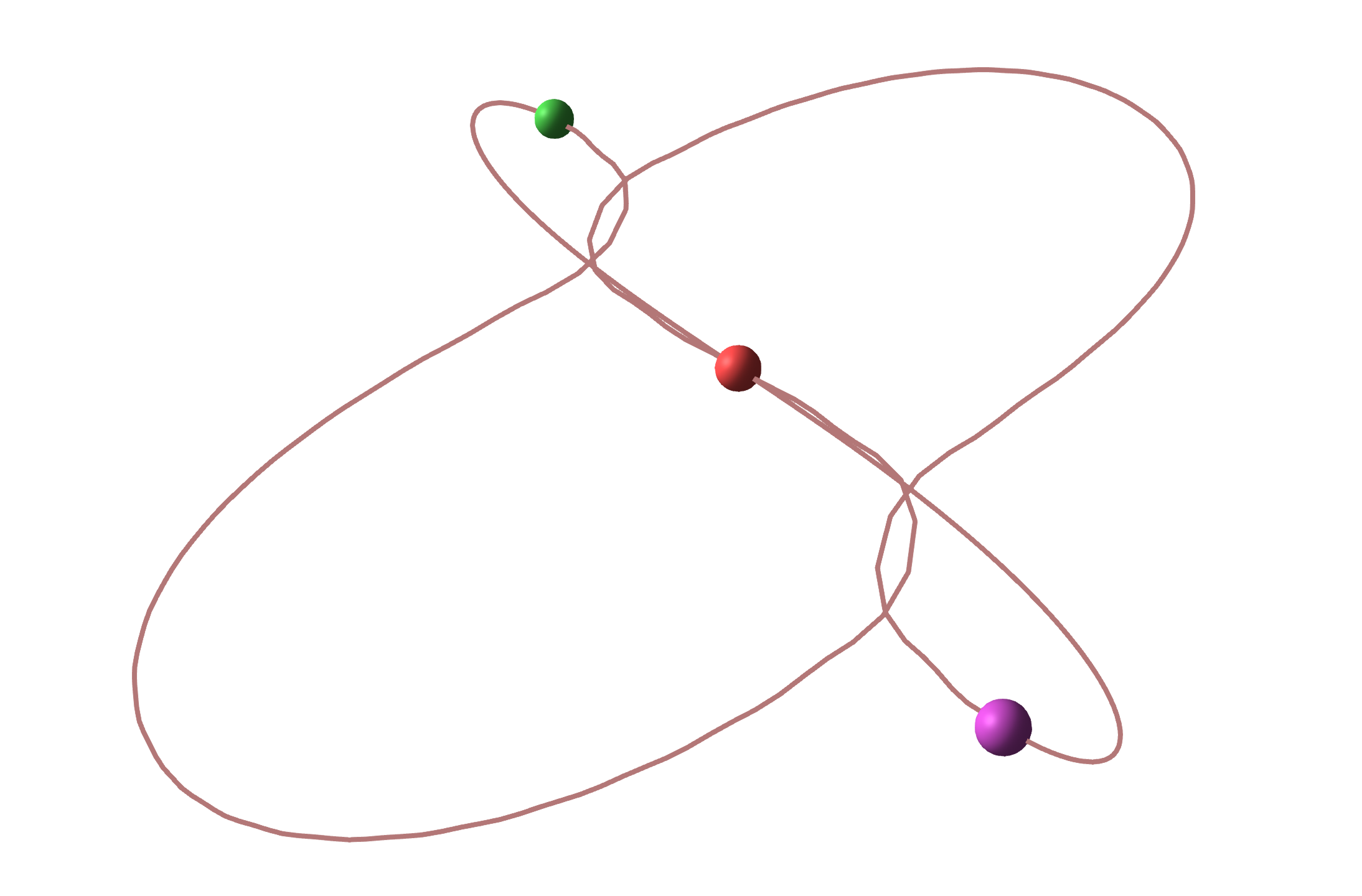}
\caption{Mountain Pass solution}
\label{fig: rolaux}
\end{figure}

\paragraph{Morse index}
Figures \ref{fig: 8-shape orbit of action level of $5.858$} and \ref{fig: 8-shape orbit of action level of $9.3$} have a discrete Morse index of \( 0 \) on \( \mathbb{I} \), meaning they can be considered local minimum points. In contrast, Figure \ref{fig: rolaux} has a discrete Morse index \( > 0 \) on \( \mathbb{I} \), indicating that it is a non-minimal solution.

\paragraph{Intra and trans level distances}
The values of \( \rho_{\mathrm{IL}} \) and \( \rho_{\mathrm{TL}_k} \) of Figures \ref{fig: 8-shape orbit of action level of $5.858$} and \ref{fig: 8-shape orbit of action level of $9.3$} are represented in Figure \ref{fig: intra/trans distances 2} in blue and yellow. The different points represent the several orbits found using Algorithm \ref{alg:MP-AIDEA_Lattice} with $\text{NoG} = 10$.
\begin{figure}[H]
\centering
\includegraphics[scale=0.17]{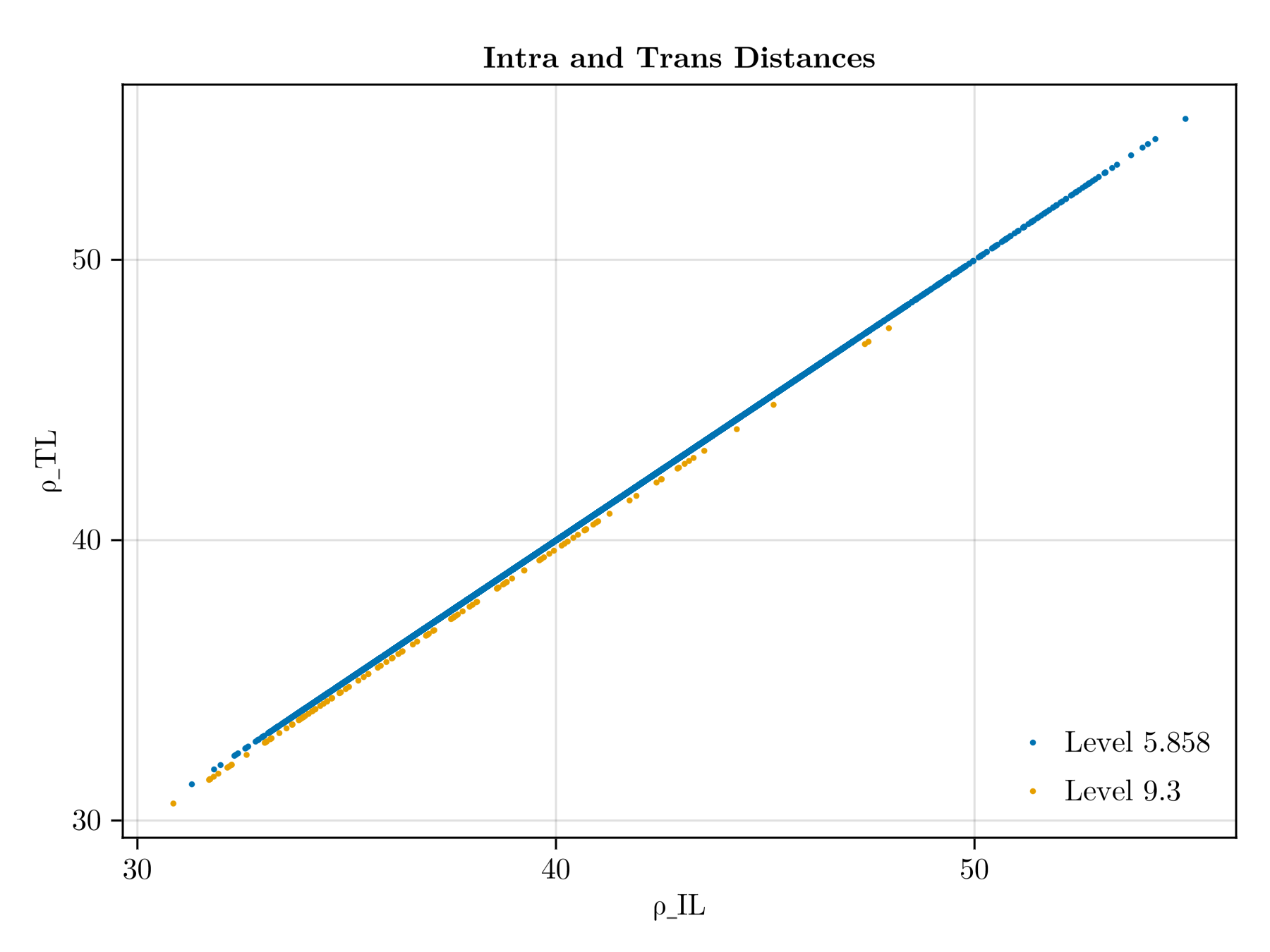}
\caption{The values of \( \rho_{\mathrm{IL}} \) and \( \rho_{\mathrm{TL}_k} \) of Figures \ref{fig: 8-shape orbit of action level of $5.858$} and \ref{fig: 8-shape orbit of action level of $9.3$}. Each point represents a local minimum point, with colours distinguishing the two levels.}
\label{fig: intra/trans distances 2}
\end{figure}

\section{Potential Applications}

While this work does not present an immediate aerospace application, the methodology introduced here has the potential for significant applications. Following Poincaré’s intuition, periodic orbits provide fundamental—if not complete—insight into the underlying chaotic dynamics of a system. Poincaré conjectured that all trajectories in a dynamical system can be approximated by periodic ones \cite{Poincare1892}, stating:  

\begin{quote}
D’ailleurs, ce qui nous rend ces solutions périodiques si précieuses, c’est qu’elles sont, pour ainsi dire, la seule brèche par où nous puissions essayer de pénétrer dans une place jusqu’ici réputée inabordable... \\
\noindent Voici un fait que je n’ai pu démontrer rigoureusement, mais qui me paraît pourtant très vraisemblable. Étant données des équations de la forme définie dans le n. 131 et une solution particulière quelconque de ces équations, on peut toujours trouver une solution périodique (dont la période peut, il est vrai, être très longue), telle que la différence entre les deux solutions soit aussi petite qu’on le veut, pendant un temps aussi long qu’on le veut.
\end{quote}

\begin{quote} \textit{English translation:}  
Moreover, what makes these periodic solutions so valuable is that they are, so to speak, the only opening through which we can attempt to enter a space that was previously considered inaccessible... \\
Here is a fact that I have not been able to demonstrate rigorously, but which still seems very plausible to me. Given equations of the form defined in no. 131 and any particular solution of these equations, one can always find a periodic solution (whose period may, it is true, be very long), such that the difference between the two solutions can be made as small as we wish, for as long a time as we wish.
\end{quote}

\noindent Therefore, the presence of a rich set of periodic solutions serves as a crucial indicator of the complexity of dynamical systems. In an aerospace context, the identification of periodic orbits can be highly relevant for mission design, spacecraft trajectory planning, and the coordination of satellite constellations. By systematically detecting and analysing these orbits, our approach offers a valuable tool for studying dynamical systems with potential applications in celestial mechanics and astrodynamics.

\section{Conclusion}
\label{sec:conclusion}
In this paper, we introduced a more efficient and straightforward approach to identifying additional critical points by combining two algorithms MP-AIDEA, a multi-population adaptive variant of inflationary differential evolution, and Lattice, a domain decomposition method. Next, we turned our attention to the stability of the identified critical points. We  investigated the nature of these critical points calculating their discrete Morse index and visualising the problem's structure using intra- and trans-level distances between local minimum points. Furthermore, we demonstrated the applicability of the Mountain Pass Theorem to a regularised version of the action functional and provided a numerical approach to finding new solutions to the symmetric $n$-body problem, given an initial set of minimum points. While the presented results offer a significant advancement, there remains much more to explore in this area. Further analysis is required to refine the methods, particularly for more complex symmetry groups or higher-dimensional systems, and to extend the numerical techniques for broader classes of problems. These directions hold promise for deeper insights into the structure of critical points and the dynamics of the $n$-body problem.

\section*{Acknowledgement}

This research has been conducted during and with the support of the Italian national inter-university PhD programme in Space Science and Technology. \\
Moreover this work is partially supported by INdAM group G.N.A.M.P.A..
The third author is partially supported by the PRIN 2022 project 20227HX33Z – Pattern formation in nonlinear phenomena

\section*{Declarations}
\textbf{Conflict of interest} The authors have no competing interests to declare that are relevant to the content of this article.

\medskip
\noindent
R. Ciccarelli \orcidlink{0009-0007-8201-536X}\\
Department of Mathematics, University of Turin\\
Via Carlo Alberto 10, 10123 Turin, Italy\\
\texttt{roberto.ciccarelli@unito.it}

\medskip

\noindent
M. Introna \orcidlink{0009-0007-6901-0767}\\
Department of Physics, University of Trento\\
Via Sommarive 14, 38123 Povo, Italy\\
\texttt{margaux.introna@unitn.it}

\medskip

\noindent
S. Terracini \orcidlink{0000-0002-6846-7841}\\
Department of Mathematics, University of Turin\\
Via Carlo Alberto 10, 10123 Turin, Italy\\
\texttt{susanna.terracini@unito.it}

\medskip

\noindent
M. Vasile \orcidlink{0000-0001-8302-6465}\\
Department of Mechanical \& Aerospace Engineering, University of Strathclyde\\
75 Montrose St, Glasgow G1 1XJ, Scotland, United Kingdom\\
\texttt{massimiliano.vasile@strath.ac.uk}


\begin{thebibliography}{10}

\bibitem{DVM20}
Marilena Di Carlo, Massimiliano Vasile, Edmondo Minisci,
\textit{Adaptive multi-population inflationary differential evolution},
Soft Computing, \textbf{24}, 3861–3891, 2020.

\bibitem{BC23}
Vivina Barutello, Gian Marco Canneori,
\textit{Lecture notes in Selected Topics in Celestial Mechanics},
Politecnico of Turin, 2023, March.

\bibitem{CI24}
Isabella Cravero, Margaux Introna,
\textit{A Study on Optimisation Methods in Celestial Mechanics},
Rendiconti del Seminario Matematico Università e Politecnico di Torino, \textbf{82}(2), 2024.

\bibitem{FerrarioTerracini05}
Davide Luigi Ferrario, Susanna Terracini,
\textit{On the existence of collisionless equivariant minimizers for the classical n-body problem},
Inventiones mathematicae, \textbf{155}, 305–362, 2004.

\bibitem{IP24}
Vivina Barutello, Mattia Bergomi, Diego Berti, Gian Marco Canneori, Roberto Ciccarelli, Irene De Blasi, Margaux Introna, Davide Polimeni, Susanna Terracini,
\textit{The factory of equivariant orbits for the 3-body problem},
forthcoming, 2024.

\bibitem{BT04}
Vivina Barutello, Susanna Terracini,
\textit{A bisection algorithm for the numerical Mountain Pass},
Nonlinear Differential Equations and Applications NoDEA, \textbf{14}, 527–539, 2007.

\bibitem{Vasile2}
Massimiliano Vasile, Edmondo Minisci, Marco Locatelli,
\textit{An Inflationary Differential Evolution Algorithm for Space Trajectory Optimization},
IEEE Transactions on Evolutionary Computation, \textbf{15}(2), 267–281, 2011.

\bibitem{Ambrosetti}
Antonio Ambrosetti, Paul H. Rabinowitz,
\textit{Dual variational methods in critical point theory and applications},
Journal of Functional Analysis, \textbf{14}(4), 349–381, 1973.

\bibitem{Pucci}
Patrizia Pucci, James Serrin,
\textit{The structure of the critical set in the Mountain Pass Theorem},
Transactions of the American Mathematical Society, \textbf{299}(1), 115–132, 1987.

\bibitem{Palais}
Richard S. Palais,
\textit{The principle of symmetric criticality},
Communications in Mathematical Physics, \textbf{69}(1), 19–30, 1979.

\bibitem{mountain_pass_strong_force}
Mitsuru Shibayama,
\textit{Minimax approach to the $n$-body problem},
Nonlinear Dynamics in Partial Differential Equations, \textbf{64}, 221–228, 2015.

\bibitem{funnel_structure}
Robert H. Leary,
\textit{Global optimization on funneling landscapes},
Journal of Global Optimization, \textbf{18}, 367–383, 2000.

\bibitem{symorb}
Vivina Barutello, Mattia G. Bergomi, Gian Marco Canneori, Roberto Ciccarelli, Davide L. Ferrario, Susanna Terracini, Pietro Vertechi,
\textit{Equivariant optimisation for the gravitational $n$-body problem: a computational factory of symmetric orbits},
Preprint at \url{https://arxiv.org/abs/2410.17861}, 2024.

\bibitem{nostro}
M. Introna, S. Terracini, M. Vasile, R. Ciccarelli,
\textit{Exploring new orbits for the n-body problem},
75th International Astronautical Conference, Milan, Italy, 14-18 October, Paper number: IAC-24,C1,IP,19,x90420, 2024.

\bibitem{Ambrosetti2}
Antonio Ambrosetti, Vittorio Coti Zelati,
\textit{Periodic solutions of singular Lagrangian systems},
Progress in Nonlinear Differential Equations and their Applications, \textbf{10}, Birkhäuser Boston Inc., 1993.

\bibitem{Arioli}
Gianni Arioli, Filippo Gazzola, Susanna Terracini,
\textit{Minimization properties of Hill's orbits and applications to some N-body problems},
Annales de l'Institut Henri Poincaré C, Analyse non linéaire, \textbf{17}(5), 617–650, 2000.

\bibitem{Bahri}
A. Bahri, P.H. Rabinowitz,
\textit{Periodic solutions of Hamiltonian systems of 3-body type},
Annales de l'Institut Henri Poincaré C, Analyse non linéaire, \textbf{8}(6), 561–649, 1991.

\bibitem{Bessi}
Ugo Bessi, Vittorio Coti Zelati,
\textit{Symmetries and noncollision closed orbits for planar N-body-type problems},
Nonlinear Analysis: Theory, Methods $\&$ Applications, \textbf{16}(6), 587–598, 1991.

\bibitem{Chenciner2}
Alain Chenciner,
\textit{Action minimizing periodic orbits in the Newtonian n-body problem},
Amer. Math. Soc., Providence, RI, Celestial mechanics (Evanston, IL, 1999), 71–90, 2002.

\bibitem{ChencinerMontgomery}
Alain Chenciner, Richard Montgomery,
\textit{A Remarkable Periodic Solution of the Three-Body Problem in the Case of Equal Masses},
Annals of Mathematics, \textbf{152}(3), 881–901, 2000.

\bibitem{Chen}
Kuo-Chang Chen,
\textit{Action-Minimizing Orbits in the Parallelogram Four-Body Problem with Equal Masses},
Annals of Mathematics, \textbf{158}(4), 293–318, 2001.

\bibitem{Ramos}
M. Ramos, S. Terracini,
\textit{Noncollision Periodic Solutions to Some Singular Dynamical Systems with Very Weak Forces},
Journal of Differential Equations, \textbf{118}(1), 121–152, 1995.

\bibitem{ChencinerVenturelli}
Alain Chenciner, Andrea Venturelli,
\textit{Minima de L'intégrale D'action du Problème Newtoniend 4 Corps de Masses Égales Dans R3: Orbites 'Hip-Hop'},
Celestial Mechanics and Dynamical Astronomy, \textbf{77}(2), 139–151, 2000.

\bibitem{Barutello_2004}
Vivina Barutello, Susanna Terracini,
\textit{Action minimizing orbits in the n-body problem with simple choreography constraint},
Nonlinearity, \textbf{17}(6), 2015, 2004.

\bibitem{gronchi}
G. Fusco, G.F. Gronchi, P. Negrini,
\textit{Platonic polyhedra, topological constraints and periodic solutions of the classical N-body problem},
Inventiones mathematicae, \textbf{185}, 283–332, 2011.

\bibitem{MONTALDI_STECKLES_2013}
James Montaldi, Katrina Steckles,
\textit{Classification of symmetry groups for planar $n$ body choreographies},
Forum of Mathematics, Sigma, \textbf{1}, e5, 2013.

\bibitem{Sim}
Carles Simó,
\textit{New Families of Solutions in N-Body Problems},
European Congress of Mathematics, Birkhäuser Basel, 101–115, 2001.

\bibitem{Sim2}
Carles Simó,
\textit{Periodic Orbits of the Planar N-Body Problem with Equal Masses and All Bodies on the Same Path},
IoP Publishing, Bristol, 265–284, 2001.

\bibitem{BarutelloFerrarioTerracini}
Vivina Barutello, Davide Luigi Ferrario, Susanna Terracini,
\textit{Symmetry Groups of the Planar Three-Body Problem and Action-Minimizing Trajectories},
Arch Rational Mech Anal, \textbf{190}, 189–226, 2008.

\bibitem{XIA2022302}
Zhihong Xia, Tingjie Zhou,
\textit{Applying the symmetry groups to study the n body problem},
Journal of Differential Equations, \textbf{310}, 302–326, 2022.

\bibitem{Sim3}
Carles Simó,
\textit{Dynamical properties of the figure eight solution of the three-body problem},
Semanticscholar, 2002.

\bibitem{Nocedal}
Jorge Nocedal, Stephen J. Wright,
\textit{Numerical optimisation},
2nd edition, Springer, New York, NY, 2006.

\bibitem{hillier2005}
F. S. Hillier, G. J. Lieberman,
\textit{Introduction to Operations Research},
8th edition, McGraw Hill, New York, 2005.

\bibitem{Akan2023}
T. Akan, A. M. Anter, A. S. Etaner-Uyar, D. Oliva,
\textit{Engineering Applications of Modern Metaheuristics},
Springer Nature Switzerland AG, 2023.

\bibitem{Gupta2021}
S. Gupta, H. Abderazek, B. S. Yıldız, A. R. Yildiz, S. Mirjalili, S. M. Sait,
\textit{Comparison of Metaheuristic Optimisation Algorithms for Solving Constrained Mechanical Design Optimisation Problems},
Expert Systems with Applications, \textbf{183}, November 2021.

\bibitem{Liu2014}
B. Liu, W. Zhao,
\textit{New Exact Penalty Functions for Nonlinear Constrained Optimization Problems},
Hindawi Publishing Corporation Abstract and Applied Analysis, \textbf{2014}, Article ID 738926.

\bibitem{Meng2005}
Z. Meng, Q. Hu, C. Dang, X. Yang,
\textit{An Objective Penalty Function Method for Nonlinear Programming},
McGrawHill Higher Education, 2005.

\bibitem{OGUNTOLA2021109165}
Micheal B. Oguntola, Rolf J. Lorentzen,
\textit{Ensemble-based constrained optimization using an exterior penalty method},
Journal of Petroleum Science and Engineering, \textbf{207}, 109165, 2021.

\bibitem{Oztas2023}
G. Z. Oztas, S. Erdem,
\textit{A Penalty-based Algorithm Proposal for Engineering Optimisation Problems},
Neural Computing and Applications, \textbf{35}, 7635–765, 2023.

\bibitem{myrdal2013}
Kjartan Kari Gardarsson Myrdal,
\textit{Some Theoretical and Numerical Aspects of the N-Body Problem},
Bachelor's thesis, Lund University Faculty of Science, Centre for Mathematical Sciences, Mathematics, 2013.

\bibitem{Parejo2012}
José Antonio Parejo, Antonio Ruiz-Cortés, Sebastián Lozano, Pablo Fernandez,
\textit{Metaheuristic Optimization Frameworks: A Survey and Benchmarking},
Soft Computing, \textbf{16}(3), 527–561, 2012.

\bibitem{Sorensen2012}
K. Sörensen,
\textit{Metaheuristics—The Metaphor Exposed},
International Transactions in Operational Research, \textbf{22}, 3–18, 2015.

\bibitem{Price2005}
K. Price, R. Storn, J. Lampinen,
\textit{Differential Evolution: A Practical Approach to Global Optimization},
Springer, 2005.

\bibitem{Leary2000}
R. H. Leary,
\textit{Global Optimization on Funneling Landscapes},
Journal of Global Optimization, \textbf{18}(4), 367–383, 2000.

\bibitem{Wales1997}
D. J. Wales and J. P. K. Doye,
\textit{Global Optimization by Basin-Hopping and the Lowest Energy Structures of Lennard-Jones Clusters Containing up to 110 Atoms},
Journal of Physical Chemistry A, \textbf{101}, 5111–5116, 1997.

\bibitem{Kapela2007}
T. Kapela and C. Simó,
\textit{Computer Assisted Proofs for Nonsymmetric Planar Choreographies and for Stability of the Eight},
Nonlinearity, \textbf{20}, 1241–1255, 2007.

\bibitem{Kapela2017}
T. Kapela and C. Simó,
\textit{Rigorous KAM Results Around Arbitrary Periodic Orbits for Hamiltonian Systems},
Nonlinearity, \textbf{30}(3), 965–986, 2017.

\bibitem{Kapela2003}
T. Kapela and P. Zgliczynski,
\textit{The Existence of Simple Choreographies for the N-body Problem—A Computer Assisted Proof},
Nonlinearity, \textbf{16}(6), 1899–1918, 2003.

\bibitem{Moore1993}
C. Moore,
\textit{Braids in Classical Dynamics},
Physical Review Letters, \textbf{70}(24), 3675–3679, 1993.

\bibitem{Moore2006}
C. Moore and M. Nauenberg,
\textit{New Periodic Orbits for the N-body Problem},
Journal of Computational and Nonlinear Dynamics, \textbf{1}(4), 307–311, 2006.

\bibitem{Simo2001b}
C. Simó,
\textit{Periodic Orbits of the Planar N-Body Problem with Equal Masses and All Bodies on the Same Path},
in: Periodic Orbits of the Planar N-Body Problem with Equal Masses and All Bodies on the Same Path, IoP Publishing, Bristol, 265–284, 2001.

\bibitem{Fenucci2022}
M. Fenucci,
\textit{Local Minimality Properties of Circular Motions in \(1/r^\alpha\) Potentials and of the Figure-Eight Solution of the 3-Body Problem},
Partial Differential Equations and Applications, \textbf{3}(1), 10, 2022.

\bibitem{Poincare1892}
H. Poincaré,
\textit{Les méthodes nouvelles de la mécanique céleste},
1892.

\bibitem{turn0search0}
Bert C. Rust and John R. Lewis,
\textit{Finite Difference Discretizations by Differential Quadrature Techniques},
Communications in Numerical Methods in Engineering, \textbf{15}(11), 823–833, 1999.

\bibitem{turn0search3}
Yukun Huang and Adam Oberman,
\textit{Numerical Methods for the Fractional Laplacian: A Finite Difference–Quadrature Approach},
SIAM Journal on Numerical Analysis, \textbf{52}(6), 3056–3084, 2014.

\bibitem{turn0academia19}
Jason E. Hicken and David W. Zingg,
\textit{Summation-By-Parts Operators and High-Order Quadrature},
SIAM Journal on Scientific Computing, \textbf{33}(2), 893–922, 2011.


\end{thebibliography}
\end{document}